\newtheorem*{rep@theorem}{\rep@title}
\newcommand{\newreptheorem}[2]{%
\newenvironment{rep#1}[1]{%
 \def\rep@title{#2 \ref{##1}}%
 \begin{rep@theorem}}%
 {\end{rep@theorem}}}
\newtheorem{theorem}{Theorem}[section]
\newtheorem{lemma}[theorem]{Lemma}
\newtheorem{claim}[theorem]{Claim}
\newtheorem{conjecture}[theorem]{Conjecture}
\newtheorem{problem}[theorem]{Problem}
\begin{document}

\title{Extremal Problems for Subset Divisors}
\author{Tony Huynh\footnote{Department of Computer Science, University of Rome, Via Salaria 113, 00198 Rome, Italy  Email: \url{tony.bourbaki@gmail.com}}}
\date{}
\maketitle
\begin{abstract}
Let $A$ be a set of $n$ positive integers.  We say that a subset $B$ of $A$ is a \emph{divisor} of $A$, if the sum of the elements in $B$ divides the sum of the elements in $A$.  We are interested in the following extremal problem.  For each $n$, what is the maximum number of divisors a set of $n$ positive integers can have?  We determine this function exactly for all values of $n$.   Moreover, for each $n$ we characterize all sets that achieve the maximum.  We also prove results for the $k$-subset analogue of our problem.  For this variant, we determine the function exactly in the special case that $n=2k$.  We also characterize all sets that achieve this bound when $n=2k$.  

  \bigskip\noindent \textbf{Keywords:} extremal combinatorics; exact enumeration
\end{abstract}

\section{Introduction}

Let $A$ be a finite set of positive integers and let $B$ be a subset of $A$.  We say that $B$ is a \emph{divisor} of $A$, if the sum of the elements in $B$ divides the sum of the elements in $A$.  We are interested in the number of divisors a set of positive integers can have.  Toward that end, we let $d(A)$ be the number of divisors of $A$ and we let $d(n)$ be the maximum value of $d(A)$ over all sets $A$ of $n$ positive integers.  We also study the $k$-subset version of this problem.  That is, we define $d_k(A)$ to be the number of $k$-subset divisors of $A$ and  we let $d(k,n)$ be the maximum value of $d_k(A)$ over all sets $A$ of $n$ positive integers.

This work is motivated by Problem 1 of the 2011 International Mathematical Olympiad~\cite{IMO}, where it is asked to determine $d(2,4)$.

\begin{problem} \label{imo}
Determine $d(2,4)$.  Moreover, find all sets of four positive integers $A$ with exactly $d(2,4)$ 2-subset divisors. 
\end{problem}

We begin by presenting the solution to Problem~\ref{imo}.

\begin{lemma} \label{olympiad}
For all sets $A$ of four positive integers, $d_2(A) \leq 4$.  Moreover, $d_2(A)=4$ if and only if 
\[
A= \{a, 5a, 7a, 11a\} \text{ or } A=\{a, 11a, 19a, 29a\}
\]
for some $a \in \mathbb{N}$.
\end{lemma}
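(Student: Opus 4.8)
The plan is to pass to arithmetic on the four elements. Write $A = \{a,b,c,d\}$ with $a < b < c < d$ and put $S = a+b+c+d$. The six $2$-subsets fall into three \emph{complementary pairs} — $\{a,b\}$ with $\{c,d\}$, $\{a,c\}$ with $\{b,d\}$, and $\{a,d\}$ with $\{b,c\}$ — where the two sets in a pair have sums adding up to $S$. I would first record the elementary fact that if both sets of a complementary pair divide $S$, then each of their sums equals $S/2$: any divisor of $S$ that is smaller than $S$ is at most $S/2$, and two numbers $\le S/2$ summing to $S$ are each $S/2$. Call such a pair \emph{balanced}; a balanced pair contributes exactly two divisors (since $S/2 \mid S$), and every other pair contributes at most one.

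Next I would observe that among the three pairs only $\{a,d\}$ with $\{b,c\}$ can possibly be balanced, because $a+b < c+d$ and $a+c < b+d$ strictly (using $a<b$ and $c<d$). Summing the three per-pair bounds gives $d_2(A) \le 1 + 1 + 2 = 4$, which is the first assertion. For the equality case, $d_2(A) = 4$ forces the pair $\{a,d\}/\{b,c\}$ to be balanced, i.e.\ $a+d = b+c$, and forces exactly one divisor out of each of the other two pairs; but $c+d$ and $b+d$ each lie strictly between $S/2$ and $S$ (again by $a<b<c<d$) and hence do not divide $S$, so the divisors coming from those pairs must be the smaller members $a+b$ and $a+c$. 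Thus $d_2(A) = 4$ if and only if $a+d=b+c$, $a+b \mid S$, and $a+c \mid S$; conversely any such $A$ plainly has the four divisor-subsets $\{a,b\},\{a,c\},\{a,d\},\{b,c\}$.

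The remaining task is to solve this system. Put $m = b+c = a+d$, so $S = 2m$, and note that $a+b < a+c < m$. Since $a+b$ and $a+c$ divide $2m$, I can write $a+c = 2m/t$ and $a+b = 2m/t'$ with integers $3 \le t < t'$ (the lower bound since $a+c<m$, the strict inequality since $a+b<a+c$). Adding these and using $(a+b)+(a+c) = 2a+m$ yields $a = m\bigl(\tfrac1t + \tfrac1{t'} - \tfrac12\bigr)$, so positivity of $a$ forces $\tfrac1t + \tfrac1{t'} > \tfrac12$; a short check shows the only solutions with $3 \le t < t'$ are $(t,t') = (3,4)$ and $(t,t') = (3,5)$. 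Back-substituting, the first yields $(a,b,c,d) = (m/12)\,(1,5,7,11)$ and the second $(a,b,c,d) = (m/30)\,(1,11,19,29)$; integrality of the entries makes $m/12$, respectively $m/30$, range over all positive integers, giving exactly the two stated families.

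I expect the only delicate point to be the reduction in the second paragraph: one must pin down precisely which complementary pair is allowed to be balanced, and check that in each of the other two pairs the larger sum lies in the open interval $(S/2,\,S)$ and is therefore a non-divisor of $S$ — this is what forces the four divisors in the equality case to be exactly $\{a,b\},\{a,c\},\{a,d\},\{b,c\}$. Once that bookkeeping is done, the concluding Diophantine search over $(t,t')$ is routine.
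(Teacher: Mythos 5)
Your proof is correct and follows essentially the same route as the paper: the upper bound rests on the observation that in each complementary pair the larger sum lies strictly between $S/2$ and $S$, and the equality case reduces to the identical system $a+d=b+c$, $(a+b)\mid S$, $(a+c)\mid S$. Your cofactors $t,t'$ are the paper's multipliers shifted by one ($t=j+1$, $t'=k+1$), and your unit-fraction positivity argument isolates the same two cases, $(3,4)$ and $(3,5)$, that the paper reaches by linear elimination, yielding the same two families.
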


\begin{proof}
Let $A=\{a_1, a_2, a_3, a_4\}$ be a set of positive integers with $a_1 < a_2 < a_3 < a_4$.  We use $\sum A$ to denote the sum of the elements in $A$.  Note that $\frac{1}{2} \sum A < a_2+a_4 < a_3+a_4 < \sum A$.  Thus $d_2(A) \leq 4$ as neither $\{a_2, a_4\}$ nor $\{a_3, a_4\}$ can divide $A$.  

Suppose $d_2(A)=4$.  This implies that both $\{a_1,a_4\}$ and $\{a_2, a_3\}$ divide $A$, and hence $a_1+a_4=a_2+a_3$.  Since $\{a_1,a_2\}$ and $\{a_1,a_3\}$ also divide $A$,  $(a_1+a_2) | (a_3+a_4)$ and $(a_1+a_3) | (a_2+a_4)$.  
Therefore, there exist $2 \leq j < k$ such that
\begin{enumerate}
\item[$(i)$]
$a_1+a_4=a_3+a_2$,

\item[$(ii)$]
$j(a_1+a_3)=a_2+a_4$, and

\item[$(iii)$]
$k(a_1+a_2)=a_3+a_4$.
\end{enumerate}
Adding $(i)$ and $(ii)$, we obtain $(j+1)a_1+(j-1)a_3=2a_2$.   Since $a_3 > a_2$, it follows that $j=2$.  Substituting $j=2$ and taking $3(i)+2(ii)+(iii)$ we obtain $(k+7)a_1=(5-k)a_2$.  This implies $(5-k) >0$, and so $k \in \{3,4\}$.  By solving the systems corresponding to the values $k=3$ and $k=4$  we are lead to the respective solutions
\[
A=\{a, 5a, 7a, 11a\} \text{ and } A=\{a, 11a, 19a, 29a\}.
\]
It is easy to check that any set $A$ of the above form does indeed satisfy $d_2(A)=4$.  
\end{proof}

\section{Lower bounds for $d(n)$ and $d(k,n)$} \label{lower}

In this section, we give constructions for sets of positive integers with many divisors and many $k$-subset divisors.  In the next section we derive matching upper bounds for $d(n)$ and $d(n,2n)$ and hence these sets are optimal.  Moreover, in Section~\ref{uniqueness}, we will show that these are almost all the sets achieving the maximum values.

Recall that $d(n)$ (respectively, $d(k,n)$) is the maximum number of divisors (respectively, $k$-subset divisors) a set of $n$ positive integers can have.  By convention, the sum of the elements in the empty set is zero, and so the empty set does not divide any set (except itself).  
 
 \begin{lemma} \label{lower1}
 For all $n \geq 1$, $d(n) \geq 2^{n-1}$.  
 \end{lemma}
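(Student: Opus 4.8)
The goal is to show $d(n) \geq 2^{n-1}$, i.e.\ to exhibit a set $A$ of $n$ positive integers for which at least $2^{n-1}$ subsets $B$ have $\sum B \mid \sum A$.

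The plan is to take $A$ to be a set whose sum is highly divisible — ideally $\sum A$ should be divisible by $\sum B$ for \emph{every} subset $B$ of a large, controlled family. The cleanest construction is a geometric-type set. Consider $A = \{1, 2, 4, \dots, 2^{n-2}, m\}$ where the first $n-1$ elements are the powers of two $1, 2, \dots, 2^{n-2}$ (whose total is $2^{n-1} - 1$) and $m$ is chosen to make $\sum A$ as divisible as possible. Here is the key observation: the nonempty subsets of $\{1, 2, \dots, 2^{n-2}\}$ have sums that range over \emph{all} of $1, 2, \dots, 2^{n-1}-1$, each exactly once. So if we pick $m = N - (2^{n-1}-1)$ where $N = \operatorname{lcm}(1, 2, \dots, 2^{n-1}-1)$ (or any sufficiently large common multiple to guarantee $m$ is a positive integer exceeding the other elements, so that $A$ really has $n$ distinct elements), then $\sum A = N$ is divisible by every integer in $\{1, \dots, 2^{n-1}-1\}$. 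Consequently every one of the $2^{n-1}-1$ nonempty subsets of $\{1, 2, \dots, 2^{n-2}\}$ divides $A$; adding in $A$ itself (the whole set always divides itself) gives $2^{n-1}$ divisors.

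So the steps are: (1) fix $n$ and set $S_0 = \{2^0, 2^1, \dots, 2^{n-2}\}$, noting $|S_0| = n-1$ and that the subset sums of $S_0$ are exactly the integers $0, 1, \dots, 2^{n-1}-1$ by binary representation; (2) let $N$ be a common multiple of all of $1, 2, \dots, 2^{n-1}-1$ large enough that $N - (2^{n-1}-1) > 2^{n-2}$, and set $m = N - (2^{n-1}-1)$ and $A = S_0 \cup \{m\}$; (3) check $|A| = n$ (the elements of $S_0$ are distinct and $m$ is strictly larger than all of them) and $\sum A = N$; (4) conclude that each nonempty $B \subseteq S_0$ satisfies $\sum B \in \{1, \dots, 2^{n-1}-1\}$, hence $\sum B \mid N = \sum A$, so $B$ is a divisor; (5) these are $2^{n-1}-1$ distinct divisors, and together with $A$ itself we get $2^{n-1}$, so $d(A) \geq 2^{n-1}$ and therefore $d(n) \geq 2^{n-1}$.

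There is essentially no hard part here — this is a pure construction argument. The only points requiring a little care are the bookkeeping at the boundary ($n = 1$, where $S_0 = \emptyset$ and $A = \{N\}$ for any $N$, giving the single divisor $A$ itself, which matches $2^0 = 1$; and $n = 2$, where $S_0 = \{1\}$ and one takes, e.g., $A = \{1, m\}$), and making sure the ``large enough $N$'' clause genuinely forces $m$ to be a positive integer distinct from and larger than every power of two in $S_0$ so that $A$ has exactly $n$ elements. One should also remember the convention stated in the excerpt that the whole set $A$ counts as a divisor of itself, which is what supplies the extra $+1$. I would mention in passing that this bound is in fact not best possible for all $n$ (the paper's later results sharpen it), but $2^{n-1}$ is the clean benchmark this lemma records.
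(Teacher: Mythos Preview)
Your proof is correct and follows essentially the same construction as the paper: fix an $(n-1)$-element set, then adjoin one further element so that the total sum becomes a common multiple of all its nonempty subset sums, yielding $2^{n-1}-1$ divisors from those subsets plus $A$ itself. The only difference is cosmetic---the paper starts from an \emph{arbitrary} $(n-1)$-set $A'$ and takes $\ell' = $ a large multiple of $\operatorname{lcm}\{\sum B : \emptyset \neq B \subseteq A'\}$, whereas you specialize to $A' = \{1,2,\dots,2^{n-2}\}$; the binary-representation observation is pleasant but unnecessary for the argument.

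One small correction to your closing aside: the paper does \emph{not} sharpen this bound---it shows $d(n) = 2^{n-1}$ exactly for all $n \neq 3$, so $2^{n-1}$ is tight, not a benchmark to be improved.
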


\begin{proof}
The lemma clearly holds if $n=1$.  Thus, assume $n \geq 2$ and let $A'$ be any set of $n-1$ positive integers.  We show that we can choose an element $a$ such that
$A' \cup \{a\}$ has $2^{n-1}$ divisors.  Let 
\[
S:=\{s \in \mathbb{N}: s=\sum B \text{ for some non-empty $B \subseteq A'$} \}.
\]
Let $\ell$ be the least common multiple of the elements in $S$, and let $\ell'$ be a multiple of $\ell$ such that $\ell' - \sum A' \notin A$.  Set $a:= \ell' - \sum A'$ and  consider $A:=A' \cup \{a\}$.  Note that $\sum A= \ell'$.  Therefore, every non-empty subset of $A'$ divides $A$.  Also, $A$ divides $A$.   Thus $d(A) \geq 2^{n-1}$, as required.  
\end{proof}

A similar construction also gives lower bounds for $d(k,n)$.

\begin{lemma} \label{lower2}
For all $k,n \geq 1$, $d(k,n) \geq \binom{n-1}{k}$.  
\end{lemma}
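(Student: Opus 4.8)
The plan is to mimic the proof of Lemma~\ref{lower1}. First, if $n \leq k$ then $\binom{n-1}{k} = 0$ and there is nothing to prove, so I would assume $n \geq k+1$; this guarantees that a set of $n-1$ positive integers has at least one $k$-subset, which is what makes the construction below go through.

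Next, starting from an arbitrary set $A'$ of $n-1$ positive integers, I would adjoin a single large element $a$ chosen so that the total sum $\sum A$ of $A := A' \cup \{a\}$ is divisible by the sum of every $k$-subset of $A'$. Concretely, let $S := \{\sum B : B \subseteq A',\ |B| = k\}$, let $\ell$ be the least common multiple of the (finitely many, positive) elements of $S$, and let $\ell'$ be a multiple of $\ell$ large enough that $a := \ell' - \sum A'$ is a positive integer not already in $A'$; any sufficiently large multiple of $\ell$ works. Then $\sum A = \ell'$ is divisible by $\sum B$ for every $k$-subset $B$ of $A'$, so each of the $\binom{n-1}{k}$ distinct $k$-subsets of $A'$---which are also $k$-subsets of $A$---divides $A$. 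Hence $d_k(A) \geq \binom{n-1}{k}$, and since $A$ has exactly $n$ elements this yields $d(k,n) \geq \binom{n-1}{k}$.

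There is no real obstacle here; the only points that need a line of care are that $S$ is nonempty and finite (so that $\ell$ is well-defined), which is precisely why the case $n \geq k+1$ was separated out, and that a sufficiently large multiple $\ell'$ of $\ell$ makes $a$ a positive integer distinct from every element of $A'$. I would not bother counting the $k$-subsets of $A$ that contain $a$, since the $\binom{n-1}{k}$ subsets drawn entirely from $A'$ already suffice for the claimed bound.
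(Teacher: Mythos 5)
Your proof is correct and is essentially the paper's own argument: adjoin $a := \ell' - \sum A'$ where $\ell'$ is a suitable multiple of the least common multiple of the $k$-subset sums of $A'$, so that all $\binom{n-1}{k}$ such subsets divide the resulting $n$-set. Your explicit handling of the trivial case $n \leq k$ is a minor tidying of a point the paper glosses over, not a different route.
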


\begin{proof}
Again, the lemma clearly holds for $n=1$.  So, for $n \geq 2$ arbitrarily choose a set $A'$ of $n-1$ positive integers and let 
\[
S:=\{ s \in \mathbb{N} : s=\sum B \text{ for some $B \subseteq A'$ with $|B|=k$} \}.
\]
The rest of the proof is identical to the proof of the previous lemma.  That is, we construct $a$ such that all $k$-subsets of $A'$ divide $A' \cup \{a\}$. 
\end{proof}

We point out that the same technique shows that the corresponding minimization problems for $d(A)$ and $d_k(A)$ are easy.  Namely, define $A$ to be \emph{prime}
if the only divisor of $A$ is $A$ itself.  

\begin{claim}
For each $n \in \mathbb{N}$, there exists infinitely many prime $n$-sets of integers.  
\end{claim}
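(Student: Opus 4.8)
The plan is to force $\sum A$ to be a prime $p$ strictly larger than every element of $A$, while keeping every element of $A$ at least $2$. If this can be arranged, then for any proper non-empty $B \subsetneq A$ the complement $A \setminus B$ is non-empty and has positive sum, so $2 \le \sum B < p$; since $p$ is prime, $\sum B$ does not divide $p = \sum A$, so $B$ is not a divisor of $A$. Together with the fact that $\emptyset$ (sum $0$) never divides $A$, this shows the only divisor of $A$ is $A$ itself, i.e. $A$ is prime.

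Concretely, I would fix the base block $\{2,3,\dots,n\}$ (which is empty when $n=1$), write $c$ for its sum, and for each prime $p > c+n$ set $A_p := \{2,3,\dots,n\} \cup \{p-c\}$. The first step is to verify that $A_p$ is a genuine $n$-element set of positive integers: the choice $p > c+n$ forces $p-c > n \ge 2$, so the new element lies outside $\{2,\dots,n\}$ and is itself at least $2$. The second step is the primality argument of the previous paragraph, applied with $\sum A_p = c + (p-c) = p$. The final step is to note that there are infinitely many primes $p > c+n$ — Euclid's theorem suffices, no appeal to Dirichlet is needed — and that distinct such primes give distinct sets, since $p = \sum A_p$ is recoverable from $A_p$; this yields infinitely many prime $n$-sets.

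There is no genuinely hard step; the only points needing care are the degenerate case $n=1$ (where the base block is empty, $c=0$, and $A_p=\{p\}$ has no proper non-empty subset at all, hence is vacuously prime) and checking that each $A_p$ really has $n$ distinct elements — both handled by the single inequality $p > c+n$. Note also that the symmetric base block $\{2,\dots,n\}$ is chosen only for definiteness: any fixed $(n-1)$-element set of integers that are all at least $2$ would serve equally well.
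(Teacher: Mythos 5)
Your proof is correct and follows essentially the same route as the paper: take an $(n-1)$-set with no element equal to $1$, append one element to make the total sum a large prime $p$, and note that every proper non-empty subset then has sum strictly between $1$ and $p$, hence cannot divide $p$; varying $p$ gives infinitely many such sets. Your version just fixes the base block $\{2,\dots,n\}$ and spells out the element-distinctness and $n=1$ checks that the paper leaves implicit.
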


\begin{proof}
Arbitrary choose a set $A'$ of $n-1$ positive integers, with $1 \notin A'$.    Choose a prime number $p$ such that $p \geq 2\sum A'$.  Finish by setting $a:=p-\sum A'$ and $A:=A' \cup \{a\}$.  
\end{proof}

\section{Upper bounds for $d(n)$ and $d(n,2n)$}

Let $A$ be a set of positive integers.   We say that a subset $B$ of $A$ is a \emph{halving set} if $\sum B = \frac{1}{2} \sum A$.  Evidently, $B$ is a halving set if and only if $A \setminus B$ is a halving set.  The next lemma is also obvious, but quite useful.  

\begin{lemma} \label{half}
If $B$ and $C$ are distinct halving sets, then $| B \triangle C | > 2$.  
\end{lemma}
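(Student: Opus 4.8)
The plan is to argue by contradiction from the hypothesis $|B \triangle C| \le 2$, exploiting the fact that all elements of $A$ are positive and pairwise distinct. The key identity is that, since $\sum B = \sum C = \tfrac12 \sum A$, we have
\[
\sum (B \bs C) \;=\; \sum B - \sum(B \cap C) \;=\; \sum C - \sum(B \cap C) \;=\; \sum(C \bs B),
\]
so the two ``private'' parts $B \bs C$ and $C \bs B$ have equal sums. Since $B \ne C$, at least one of these parts is nonempty, hence both are nonempty (a nonempty set of positive integers has positive sum, so if one private part were empty the other would have to sum to $0$). In particular $|B \bs C| \ge 1$ and $|C \bs B| \ge 1$, giving $|B \triangle C| \ge 2$.

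It remains to rule out $|B \triangle C| = 2$, i.e. the case $|B \bs C| = |C \bs B| = 1$. First I would write $B \bs C = \{x\}$ and $C \bs B = \{y\}$. The equal-sums identity then forces $x = y$. But $x \in B \bs C$ and $y \in C \bs B$ are elements lying in disjoint sets, so $x \ne y$ — contradiction. Hence $|B \triangle C| \ge 3 > 2$, as claimed.

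I expect no real obstacle here; the statement is ``obvious'' as the text says, and the only thing to be careful about is organizing the trivial cases cleanly (distinctness giving a nonempty private part, positivity upgrading that to both private parts nonempty, and distinctness of elements of $A$ killing the size-$(1,1)$ case). If one prefers, the whole thing can be phrased in one line: the function $D \mapsto \sum D$ restricted to subsets differing from $B$ in at most two elements takes values $\sum B,\ \sum B \pm x,\ \sum B \pm x \pm y$ with $x \ne y$ positive, and the only way to return to the value $\sum B$ is to change nothing.
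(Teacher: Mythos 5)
Your proof is correct and is exactly the argument the paper leaves implicit when it calls the lemma ``obvious'': since $\sum B=\sum C$, the private parts $B\setminus C$ and $C\setminus B$ have equal positive sums, so distinctness rules out $|B\triangle C|\le 1$, and the size-$(1,1)$ case would force two distinct elements of $A$ to be equal. No issues; note only that you never need the full halving hypothesis, just $\sum B=\sum C$.
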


A \emph{separation} of $A$ is a pair $\{B,C\}$, where $B$ and $C$ are disjoint subsets of $A$ with $B \cup C=A$.  Note that $\{B,C\}=\{C,B\}$.    A \emph{strong separation} is a separation $\{B,C\}$ where $|B|=|C|$.  We say that $\{B,C\}$ is \emph{barren} if neither $B$ nor $C$ divides $A$, \emph{neutral} if exactly one of $B$ or $C$ divides $A$, and \emph{abundant} if both $B$ and $C$ divide $A$.  Note that $\{B,C\}$ is an abundant separation if and only if $B$ and $C$ are both halving sets.   

Thus, one approach to obtain upper bounds for $d(n)$ (respectively, $d(n,2n)$) is to bound the number of abundant separations (respectively, abundant strong separations) of $A$.  

\begin{lemma} \label{abundant1}
Let $A$ be a set of $n$ positive integers.  Then $d(A) \leq 2^{n-1}+h$, where $h$ is the number of abundant separations of $A$.
\end{lemma}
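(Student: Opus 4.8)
The plan is to pair up subsets of $A$ via the separation relation and count carefully. Every subset $B \subseteq A$ belongs to exactly one separation $\{B, A \setminus B\}$ of $A$, and conversely each separation $\{B, C\}$ corresponds to exactly two subsets, namely $B$ and $C$ (these are distinct since $B \cap C = \emptyset$ and $B \cup C = A$, so $B = C$ would force $A = \emptyset$, which we may exclude as $n \geq 1$). Hence the $2^n$ subsets of $A$ are partitioned into exactly $2^{n-1}$ separations. Now I would classify each separation as barren, neutral, or abundant according to the definitions given, and let $b$, $m$, $h$ denote the respective counts, so that $b + m + h = 2^{n-1}$.

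The key observation is that $d(A)$, the number of divisor subsets of $A$, can be read off from this classification: a barren separation contributes $0$ divisors, a neutral one contributes exactly $1$, and an abundant one contributes exactly $2$. Therefore
\[
d(A) = m + 2h = (m + h) + h \leq (b + m + h) + h = 2^{n-1} + h,
\]
where the inequality just uses $b \geq 0$. This is precisely the claimed bound.

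The only subtlety I anticipate is the boundary behavior of the empty set: the separation $\{\emptyset, A\}$ must be handled consistently with the paper's convention that the empty set does not divide any nonempty set but $A$ does divide $A$. Under that convention this separation is neutral (exactly $A$ divides $A$), and the count $d(A)$ as defined counts $A$ itself as one of its divisors, so everything lines up — $A$ contributes its $1$ to $m$, not to $h$. No calculation is really required beyond this bookkeeping; the main point to get right is simply that the map $B \mapsto \{B, A \setminus B\}$ is exactly $2$-to-$1$ and that "abundant" is the same as "both sides divide $A$," which is given in the excerpt.
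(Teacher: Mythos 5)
Your proof is correct and follows essentially the same route as the paper: pair each subset with its complement to get $2^{n-1}$ separations, then note that barren, neutral, and abundant separations contribute $0$, $1$, and $2$ divisors respectively, giving $d(A)\leq 2^{n-1}+h$. Your extra remark about the separation $\{\emptyset,A\}$ being neutral under the paper's convention is a fine (if implicit in the paper) bookkeeping point.
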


\begin{proof}
Partition $2^A$ into pairs $\{B, A \setminus B\}$.  There are $2^{n-1}$ such separations.  Finish by observing that a separation contributes 0 to $d(A)$ if it barren, 1 to $d(A)$ if it is neutral, and 2 to $d(A)$ if it is abundant.
\end{proof}

Similarly, we have the following lemma.

\begin{lemma} \label{abundant2}
Let $A$ be a set of $2n$ positive integers.  Then $d_n(A) \leq \frac{1}{2}\binom{2n}{n}+h$, where $h$ is the number of abundant strong separations of $A$.
\end{lemma}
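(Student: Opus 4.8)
The plan is to mimic the proof of Lemma~\ref{abundant1} verbatim, replacing the full power set $2^A$ by the family of $n$-subsets of $A$. First I would record two elementary facts. Since $n \geq 1$, no $n$-subset $B \subseteq A$ can equal its complement $A \setminus B$, because these two sets are disjoint. Second, $\binom{2n}{n}$ is even, as $\binom{2n}{n} = 2\binom{2n-1}{n-1}$. Hence the map $B \mapsto A \setminus B$ is a fixed-point-free involution on the $\binom{2n}{n}$-element family of $n$-subsets of $A$, and it partitions this family into exactly $\frac{1}{2}\binom{2n}{n}$ pairs $\{B, A \setminus B\}$. Each such pair is a strong separation of $A$ (both parts have size $n$), and conversely every strong separation of $A$ arises in this way.

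Next I would count the contribution of each pair to $d_n(A)$, exactly as in Lemma~\ref{abundant1}: a barren strong separation contributes $0$, a neutral one contributes $1$, and an abundant one contributes $2$, since an abundant strong separation $\{B, A\setminus B\}$ consists of two $n$-subsets each dividing $A$. Writing $b$, $e$, and $h$ for the number of barren, neutral, and abundant strong separations respectively, we have $b + e + h = \frac{1}{2}\binom{2n}{n}$ and $d_n(A) = e + 2h$. Then $e + 2h = (b + e + h) - b + h \leq \frac{1}{2}\binom{2n}{n} + h$, which is the desired bound.

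I do not expect any real obstacle here; the argument is a routine transcription of Lemma~\ref{abundant1} to the uniform setting. The only point deserving a word of care is the verification that the $n$-subsets of $A$ really do pair up perfectly under complementation, so that $\frac{1}{2}\binom{2n}{n}$ is an integer equal to the exact number of strong separations of $A$. Once that is noted, the rest is the same bookkeeping as before.
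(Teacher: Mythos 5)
Your proof is correct and is exactly the argument the paper intends: it leaves Lemma~\ref{abundant2} unproved with the remark ``Similarly,'' and your pairing of each $n$-subset with its complementary $n$-subset, followed by the same $0/1/2$ counting as in Lemma~\ref{abundant1}, is precisely that routine transcription.
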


Note that these bounds match the lower bounds from the previous section if $h=0$.  
However, it is possible for a set to have many halving sets.  For example, consider $A=\{1, \dots, 4\ell\}$.  A theorem of Stanley~\cite{lefschetz} shows
that this example is in fact worst possible. 

Fortunately, we are able to determine $d(n)$ and $d(n,2n)$ using a different approach.  

We first handle $d(n)$ by showing that the bound from Lemma~\ref{lower1} is best possible for almost all values of $n$.

 \begin{lemma} \label{tight2}
 For all $n \geq 4$, $d(n)=2^{n-1}$.  
 \end{lemma}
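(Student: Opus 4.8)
The plan is to combine Lemma~\ref{lower1}, which gives $d(n) \geq 2^{n-1}$, with a matching upper bound $d(n) \leq 2^{n-1}$ for $n \geq 4$. By Lemma~\ref{abundant1}, it suffices to show that a set $A$ of $n \geq 4$ positive integers has no abundant separations, i.e., no two disjoint sets $B, C$ with $B \cup C = A$ and $\sum B = \sum C = \frac12 \sum A$; equivalently, $A$ has no halving set. Wait — that is false in general (take $A = \{1,2,3,4,5,7\}$, or indeed $A = \{1,\dots,4\ell\}$), so the approach via Lemma~\ref{abundant1} alone cannot work, and the lemma statement itself already warns us that "a different approach" is needed. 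So instead I would argue directly about $d(A)$ without routing everything through halving sets: the point must be that even if $A$ has halving sets, it cannot simultaneously have $2^{n-1}$ divisors, because having a halving set forces enough structure on $A$ that many other subsets fail to divide $\sum A$.

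Here is the direct approach I would take. Suppose $A = \{a_1 < a_2 < \cdots < a_n\}$ and $d(A) = 2^{n-1} + h$ with $h \geq 1$, so $A$ has at least one halving set; WLOG (scaling) assume $\gcd(A) = 1$ and write $\sigma = \sum A$. First I would collect the "automatic non-divisors": subsets $B$ with $\frac12 \sigma < \sum B < \sigma$ never divide $A$, and each such $B$ is paired under $B \mapsto A \setminus B$ with a set of sum $< \frac12\sigma$. The key counting idea is that $d(A) = 2^{n-1} + h$ would require that for \emph{every} separation $\{B, A\setminus B\}$ that is not abundant, exactly one side divides $\sigma$ — i.e., there are \emph{no barren separations at all}. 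So the real goal is: if $n \geq 4$, then $A$ has a barren separation (a pair $\{B, A\setminus B\}$ with $\sum B \nmid \sigma$ and $\sum(A \setminus B) \nmid \sigma$), unless $d(A) < 2^{n-1}$ for a trivial reason. To find a barren separation I would look near the "middle": pick $B$ containing $a_n$ with $\sum B$ slightly below $\sigma$ (so $\sum B \nmid \sigma$ since the next divisor down from $\sigma$ is at most $\sigma/2$), and arrange that $\sum(A \setminus B)$ also fails to divide $\sigma$ — e.g., choose $A \setminus B$ to be a single well-chosen element or pair, and show that at least one choice works by a pigeonhole/parity argument on the divisors of $\sigma$ that lie among the $a_i$ and among pairwise sums. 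A clean way: among the $n - 1 \geq 3$ sets $\{a_i\}$ with $i < n$ together with $\{a_n\}$, show that not all of $a_1, \dots, a_n$ can divide $\sigma$ (since $\sum a_i/\sigma$ would be a sum of unit fractions equal to $1$ with too few terms of size $\leq 1/2$), so some $\{a_j\}$ is a non-divisor; pair it with $A \setminus \{a_j\}$, whose sum $\sigma - a_j$ divides $\sigma$ only if $a_j \mid \sigma$ — contradiction — so $\{\{a_j\}, A \setminus \{a_j\}\}$ is barren. That disposes of the case via Lemma~\ref{abundant1} once we also rule out $h$ being large, but actually a barren separation immediately gives $d(A) \leq 2^{n-1} + h - 1 < 2^{n-1} + h$... hmm, that only helps if $h = 0$.

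Let me restructure: the honest statement is $d(A) = (\text{number of separations}) + (\text{number of abundant ones}) - (\text{number of barren ones}) = 2^{n-1} + h - b$ where $b$ counts barren separations. So $d(A) \leq 2^{n-1}$ iff $b \geq h$, i.e., $A$ has at least as many barren separations as abundant ones. Thus the crux is: \emph{for $n \geq 4$, every $n$-set has $b \geq h$}. I would prove this by an injection from abundant separations to barren ones. Given an abundant separation $\{B, A \setminus B\}$ (so both sides have sum $\sigma/2$), I would perturb it: since $|B \triangle (A\setminus B)| = n \geq 4$ and both are halving sets, move one element $x$ from $B$ to $C := A\setminus B$ and one element $y$ from $C$ to $B$ with $x \neq y$; the new separation $\{B', C'\}$ has $\sum B' = \sigma/2 + (y - x)$, and choosing $|y - x|$ small but nonzero (possible since $n \geq 4$ gives enough elements to pick a close-ish pair straddling the two sides, or else a swap that changes the sum by a controlled amount) makes $\sum B'$ land strictly between the two largest divisors of $\sigma$ that are $\leq \sigma$, namely strictly between $\sigma/2$ and $\sigma$ is too coarse — rather, land on a value that is not a divisor of $\sigma$ and whose complement $\sigma - \sum B'$ is also not a divisor; verifying this and that the map is injective (e.g. by choosing a canonical swap, like the unique minimal-change swap) is the main obstacle. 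The hard part, therefore, is making this perturbation/injection argument go through uniformly for all $n \geq 4$ and all abundant separations, handling the small cases ($n = 4, 5$) by hand if the general perturbation is too tight; I expect the $n \geq 4$ hypothesis (as opposed to $n \geq 2$) enters exactly because for $n = 2, 3$ there is not enough room to perform a sum-changing swap while staying disjoint and covering $A$.
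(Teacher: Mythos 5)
You have correctly reduced the problem to the right inequality: writing $d(A)=2^{n-1}+h-b$, where $h$ and $b$ count abundant and barren separations, it suffices to injectively map abundant separations to barren ones. This is exactly the paper's strategy. But your proposal stops at the crucial step: you never actually construct the injection. Your candidate map (swap some $x\in B$ with some $y\in C$, with $|y-x|$ ``small but nonzero'') is left unverified on both counts that matter --- you do not show such a pair exists with $0<|y-x|$ small enough that neither $\tfrac12\sigma+(y-x)$ nor $\tfrac12\sigma-(y-x)$ divides $\sigma$, and you do not prove injectivity --- and you say so yourself (``the main obstacle''). A proof plan whose central step is acknowledged as open is a genuine gap. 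Note also that the side-claim in your middle paragraph, that not all elements of $A$ can divide $\sigma$ because a sum of $n$ distinct unit fractions cannot equal $1$, is false: $A=\{1,2,3,6\}$ has $\sigma=12$ and every element divides it (this set is precisely the exceptional extremal example appearing later in the paper), so that route is dead even as a fallback.

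The missing idea in the paper is simpler than a swap: given an abundant separation $\{B,C\}$ with $\min A\in B$, just move the single element $\min A$ across, i.e.\ $\phi(\{B,C\})=\{B\setminus\min A,\;C\cup\min A\}$. Injectivity is immediate because the moved element is always the same, and if $\min A<\tfrac16\sigma$ then the two new sums lie strictly in $(\tfrac13\sigma,\tfrac12\sigma)$ and $(\tfrac12\sigma,\tfrac23\sigma)$, so neither divides $\sigma$ and the image is barren. The hypothesis $\min A<\tfrac16\sigma$ is automatic whenever some halving set has size at least $3$ (its minimum already forces this), which covers every $n\geq 5$; the only remaining case is $n=4$ with $\min A\geq\tfrac16\sigma$, where the unique abundant separation is $\{\{a_1,a_4\},\{a_2,a_3\}\}$ and one checks directly that $\{\{a_1,a_2\},\{a_3,a_4\}\}$ is barren because $\tfrac13\sigma<a_1+a_2<\tfrac12\sigma$. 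Your intuition that ``$n\geq 4$ enters because small cases lack room'' is in the right spirit, but without the one-element-move construction (or a completed version of your swap, with an existence and injectivity proof, e.g.\ via Lemma~\ref{half}), the argument does not close.
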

 
 \begin{proof}
Let $n \geq 4$ and $A$ be a set of $n$ positive integers.  By Lemma~\ref{lower1} it suffices to show $d(A) \leq 2^ {n-1}$. If no separations of $A$ are abundant, then we are done by Lemma~\ref{abundant1}.  So we may assume that $A$ contains an abundant separation.  We proceed by defining an injection $\phi$ from the set of abundant separations to the set of barren separations.  Let $\{B,C\}$ be an abundant separation.  We may assume that $\min A \in B$.  Define $\phi (\{B,C\})$ to be $\{B \setminus \min A, C \cup \min A\}$.  First note that $\phi$ is injective.  Secondly, if $\min A < \frac{1}{6} \sum A$, then $\frac{1}{3}\sum A < \sum (B \setminus \min A) < \frac{1}{2} \sum A    $.  Thus, if $\min A < \frac{1}{6} \sum A$, then $\phi$ maps abundant separations to barren separations.  So we are done unless $\min A \geq \frac{1}{6} \sum A$.
 
Observe that if $A$ contains a halving set $H$ of size at least 3, then $\min A \leq \min H < \frac{1}{6} \sum A$.   Therefore, we are done unless $n=4$.  Let $A:=\{a_1, \dots, a_4\}$ with $a_1 < a_2 < a_3 < a_4$.  Since there are no halving sets of $A$ of size 3, it follows that $\{\{a_1,a_4\}, \{a_2,a_3\} \}$ is the unique abundant separation of $A$.  Now, since $a_1 \geq \frac{1}{6} \sum A$ it follows that $\frac{1}{3} \sum A < a_1 + a_2 < \frac{1}{2} \sum A$.  Thus, $\{ \{a_1, a_2\}, \{a_3, a_4\} \}$ is a barren separation, so we are done by defining $\phi (\{\{a_1,a_4\}, \{a_2,a_3\} \}):=\{ \{a_1, a_2\}, \{a_3, a_4\} \}$.
\end{proof} 

 It is easy to determine the small values of $d(n)$ by hand.  We omit the details. 
 
 \begin{lemma} \label{easy}
We have $d(1)=1, d(2)=2$, and $d(3)=5$.   If $|A|=3$, then $d(A)=5$ if and only if $A=\{a, 2a, 3a\}$ for some $a \in \mathbb{N}$.  
 \end{lemma}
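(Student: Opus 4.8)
The plan is to dispatch $n=1$ and $n=2$ by inspection and to handle $n=3$ with the same kind of sum-and-size inequalities used in Lemma~\ref{olympiad}. For $n=1$ the only non-empty subset of a singleton is the singleton itself, so $d(1)=1$. For $n=2$, write $A=\{a_1,a_2\}$ with $a_1<a_2$; then $\{a_2\}$ cannot divide $A$ because $a_2<a_1+a_2<2a_2$, so $d(A)\le 2$, and $A=\{1,2\}$ (or Lemma~\ref{lower1}) shows $d(2)=2$.

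For $n=3$, write $A=\{a_1,a_2,a_3\}$ with $a_1<a_2<a_3$ and $\sum A=a_1+a_2+a_3$. The key step is to observe that, of the seven non-empty subsets, two can never be divisors: the set $\{a_2,a_3\}$ fails because $\tfrac12\sum A<a_2+a_3<\sum A$, and the set $\{a_1,a_3\}$ fails because it divides $A$ if and only if $(a_1+a_3)\mid a_2$, which is impossible since $0<a_2<a_1+a_3$. Hence only the five subsets $A$, $\{a_1\}$, $\{a_2\}$, $\{a_3\}$, $\{a_1,a_2\}$ can possibly divide $A$, so $d(A)\le 5$, and therefore $d(3)\le 5$; since $\{1,2,3\}$ (with sum $6$) has exactly these five divisors, $d(3)=5$.

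For the characterization, suppose $d(A)=5$, so all five of the above subsets divide $A$. From $\{a_3\}\mid A$ we get $a_3\mid a_1+a_2$, and since $0<a_1+a_2<2a_3$ this forces $a_1+a_2=a_3$, hence $\sum A=2a_3$ (so $\{a_1,a_2\}$ is automatically a halving set). Then $\{a_2\}\mid A$ gives $a_2\mid 2a_3=2(a_1+a_2)$, i.e.\ $a_2\mid 2a_1$, and since $0<2a_1<2a_2$ this forces $a_2=2a_1$; consequently $a_3=a_1+a_2=3a_1$, so $A=\{a,2a,3a\}$ with $a=a_1$. Conversely, a direct check shows $\{a,2a,3a\}$ has precisely the five divisors $\{a\},\{2a\},\{3a\},\{a,2a\},\{a,2a,3a\}$, so $d(A)=5$.

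There is no substantial obstacle here; the only point requiring care is the clean identification of the two permanently non-dividing $2$-subsets $\{a_1,a_3\}$ and $\{a_2,a_3\}$, after which both the bound $d(A)\le 5$ and the uniqueness of the extremal configuration are forced by elementary divisibility estimates.
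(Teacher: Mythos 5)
Your proof is correct and complete; the paper itself omits the argument for this lemma ("we omit the details"), and your elementary case analysis --- ruling out $\{a_2,a_3\}$ and $\{a_1,a_3\}$ by the size estimates $\tfrac12\sum A<a_2+a_3<\sum A$ and $0<a_2<a_1+a_3$, then forcing $a_3=a_1+a_2$ and $a_2=2a_1$ --- is exactly the kind of by-hand verification the author had in mind, in the same spirit as Lemma~\ref{olympiad}. No gaps.
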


We now show that for $d(n, 2n)$, the lower bound from Lemma~\ref{lower2} is also best possible for $n \geq 3$.

\begin{lemma} \label{tight}
For all $n \geq 3$, $d(n, 2n) = \frac{1}{2} \binom{2n}{n}$.
\end{lemma}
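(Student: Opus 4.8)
The plan is to mirror the structure of the proof of Lemma~\ref{tight2}, but working with strong separations and using Lemma~\ref{abundant2} in place of Lemma~\ref{abundant1}. Fix $n \geq 3$ and a set $A = \{a_1, \dots, a_{2n}\}$ of positive integers with $a_1 < \dots < a_{2n}$. By Lemma~\ref{lower2} it suffices to prove $d_n(A) \leq \tfrac{1}{2}\binom{2n}{n}$, and by Lemma~\ref{abundant2} it is enough to show that $A$ has no abundant strong separation, i.e.\ no halving set of size exactly $n$ (equivalently, no halving set at all whose size is $n$; note an abundant strong separation $\{B,C\}$ forces $\sum B = \sum C = \tfrac12\sum A$). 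So suppose for contradiction that $H$ is a halving set with $|H| = n$, and let $\overline{H} = A \setminus H$, also a halving set of size $n$.

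The key step is a counting/averaging argument forcing $H$ and $\overline{H}$ to interleave perfectly, which then collides with Lemma~\ref{half}. Since $|H| = |\overline H| = n$, among the $2n$ elements we may compare ranks: write $H = \{a_{i_1}, \dots, a_{i_n}\}$ with $i_1 < \dots < i_n$. The idea is that because $\sum H = \sum \overline H$, neither set can dominate the other too strongly in the ordering. First I would record the easy fact that the largest element $a_{2n}$ lies in exactly one of $H$, $\overline H$ — say $a_{2n} \in H$. Then consider the "swap" map that moves $a_1$ from whichever side contains it to the other side, exactly as $\phi$ in Lemma~\ref{tight2}: one shows that unless $a_1$ is comparably large relative to $\sum A$, this produces a set of size $n\pm1$ with sum strictly between $\tfrac13\sum A$ and $\tfrac12\sum A$ — but that is the wrong size, so instead I should swap a smallest element of $H$ with a largest element of $\overline H$ (or vice versa), keeping the size equal to $n$, and track how the sum changes. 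The plan is: starting from $H$, repeatedly replace the current smallest element of the set by the smallest element of $A$ not yet in it that is larger; each such step changes the size by $0$ and the sum by a positive amount, and after enough steps we reach the "top" set $\{a_{n+1}, \dots, a_{2n}\}$, whose sum is $> \tfrac12 \sum A$ (strictly, since $a_{n+1} + \dots + a_{2n} > a_1 + \dots + a_n$). Symmetrically the "bottom" set $\{a_1, \dots, a_n\}$ has sum $< \tfrac12\sum A$. Since $\sum H = \tfrac12 \sum A$ sits strictly between these, $H$ is neither the top nor the bottom set.

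From here I would argue that the only $n$-subsets whose sum can equal $\tfrac12\sum A$ must "cross" the median in a constrained way, and then use Lemma~\ref{half}: if $B$ and $C$ are distinct halving sets then $|B \triangle C| > 2$; applied to $H$ and any halving set obtained from $H$ by a single swap, this rules out the existence of a halving $n$-set adjacent (in the swap graph) to a non-halving $n$-set of the same sum-position, forcing all $n$-sets between the bottom and top to be halving — which is absurd once $n \geq 3$ since the sums of consecutive swaps differ and cannot all equal $\tfrac12\sum A$. Concretely: pick a minimal chain of swaps from the bottom set (sum $< \tfrac12\sum A$) to $H$ (sum $= \tfrac12 \sum A$); somewhere along it the sum first reaches $\tfrac12\sum A$, and the set just before has strictly smaller sum, contradicting that a single swap changes the sum by the gap between two distinct element values while Lemma~\ref{half} (via $|B\triangle C|=2$) would be violated — actually the cleaner route is: two $n$-sets differing by one swap have symmetric difference of size $2$, so they cannot both be halving; hence along the chain at most one set is halving, but both $H$ and (by the symmetric chain from the top) ... here I need to be careful.

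The main obstacle I anticipate is exactly this last combinatorial step: turning "$H$ is strictly between the bottom and top $n$-sets" into a contradiction. Lemma~\ref{half} only forbids two halving sets from differing by a single swap; it does not by itself forbid a single halving $n$-set. So the real content must be an inequality argument particular to $n \geq 3$ analogous to the $\min A \geq \tfrac16\sum A$ dichotomy in Lemma~\ref{tight2}: if $a_1 < \tfrac{1}{?}\sum A$ then a suitable size-preserving swap out of $H$ lands strictly inside the interval $(\tfrac13\sum A, \tfrac12\sum A)$ and hence is barren, giving an injection from abundant strong separations to barren strong separations exactly as before; and the case $a_1 \geq c\sum A$ can only happen for small $n$ (it forces all $2n$ elements to be large, which with $n \geq 3$ is impossible, paralleling "we are done unless $n = 4$" in Lemma~\ref{tight2}). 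I expect the bulk of the work, and the place most likely to need a genuinely new idea, is choosing the right swap and the right constant $c$ so that the image separation is provably barren while $\phi$ stays injective; the endgame for small $n$ should then be a short finite check.
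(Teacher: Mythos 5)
Your proposal has a genuine gap, in two places. First, the reduction you set up at the start is a dead end: it is false in general that $A$ has no abundant strong separation (for instance $A=\{1,2,3,4,5,7\}$ has the halving $3$-sets $\{1,3,7\}$ and $\{2,4,5\}$), so deriving a contradiction from the mere existence of a halving $n$-set cannot succeed. What is actually needed, as in Lemma~\ref{tight2}, is the weaker counting statement that the number of barren strong separations is at least the number of abundant ones; your chain-of-swaps discussion in the middle never establishes an inequality of this kind, and you concede yourself that Lemma~\ref{half} alone cannot rule out a single halving $n$-set.

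Second, although your last paragraph correctly guesses the intended mechanism (an injection $\phi$ from abundant to barren strong separations via a size-preserving swap), you leave precisely the decisive step open: which swap to use and why its image is barren, and you predict a dichotomy on $a_1$ plus a finite check for small $n$ that is in fact unnecessary. The missing idea is this: given an abundant strong separation $\{B,C\}$ with $\min B<\min C$, exchange $\min B$ and $\min C$. Since $B$ and $C$ are halving sets with $n$ distinct elements each, their minima are automatically strictly less than $\frac{1}{2n}\sum A$, so the two new sides have sums lying in $(\frac{1}{2}\sum A,\ \sum A)$ and in $\bigl((\frac{1}{2}-\frac{1}{2n})\sum A,\ \frac{1}{2}\sum A\bigr)$, and for $n\ge 3$ the latter interval is contained in $(\frac{1}{3}\sum A,\ \frac{1}{2}\sum A)$; hence neither side divides $A$ and the image separation is barren. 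Injectivity of $\phi$ follows from Lemma~\ref{half}, because two distinct preimages of the same image would be halving sets with symmetric difference of size $2$. This argument covers $n=3$ uniformly, so no small-case analysis remains. Without this step (or a substitute for it), the proposal is an outline of intent rather than a proof.
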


\begin{proof}
Let $A$ be a set of $2n$ positive integers, with $n \geq 3$.  By Lemma~\ref{lower2}, it suffices to show that $d_n(A) \leq \frac{1}{2} \binom{2n}{n}$.  First observe that if $A$ does not contain any abundant strong separations, then we are done by Lemma~\ref{abundant2}.  

So, we may assume that $A$ contains an abundant strong separation.  In this case, we proceed by defining an injection $\phi$ from the family of abundant strong separations to the family of barren strong separations. 
Let $\{B,C\}$ be an abundant strong separation.   We define 
\[
\phi(\{B,C\}):=\{(B \setminus \min B) \cup \min C, (C \setminus \min C) \cup \min B\}.
\]  

First note that if $\phi(\{B_1, C_1\})=\phi(\{B_2, C_2\})$ for $\{B_1, C_1\} \neq \{B_2, C_2\}$, then by relabelling we may assume that $|B_1 \cap B_2|=n-1$.  However, this contradicts Lemma~\ref{half}.  So $\phi$ is indeed an injection.  We finish the proof by showing that $\phi$ maps abundant separations to barren separations.  We may assume that $\min B < \min C$.  Let $B':=(B \setminus \min B) \cup \min C$ and $C':=(C \setminus \min C) \cup \min B$.   Clearly, $B'$ does not divide $A$.  Also, as $\sum B = \sum C = \frac{1}{2} \sum A$, both $\min C$ and $\min B$ are strictly less than $\frac{1}{2n} \sum A$.   Therefore 
\[
(\frac{1}{2}-\frac{1}{2n}) \sum A < \sum C' < \frac{1}{2} \sum A.
\]
Since $n \geq 3$, $(\frac{1}{2}-\frac{1}{2n}) \geq \frac{1}{3}$.  Thus, $C'$ also does not divide $A$.  
\end{proof}
 
 
 \section{Characterizing all extremal sets} \label{uniqueness}

 We now characterize all subsets of integers that achieve the bounds in Lemma~\ref{tight2} and Lemma~\ref{tight}.   
 Let $A:=\{a_1, \dots, a_n\}$ be a set of $n$ positive integers with $a_1 < \dots < a_n$.  We say that $A$ is an \emph{anti-pencil} if the set of divisors of $A$ consists of 
 all non-empty subsets of $A \setminus \{a_n\}$ together with $A$ itself.  
Similarly, $A$ is a \emph{$k$-anti-pencil} if the set of $k$-subset divisors of $A$ is the set of all $k$-subsets of $A \setminus \{a_n\}$.  
Observe that the constructions in Section~\ref{lower} completely describe the set of all anti-pencils and the set of all $k$-anti-pencils.  

We will need the following two simple observations to aid with the case analysis.

\begin{lemma} \label{diophantine}
If $k,\ell$, and $m$ are positive integers such that $\frac{1}{k}+\frac{1}{\ell}=\frac{1}{m}$, then $k+\ell$ divides $k\ell$.  
\end{lemma}

\begin{lemma} \label{diophantine2}
If $k< \ell$ are positive integers such that $\frac{1}{k}+\frac{1}{\ell}=\frac{1}{2}$, then $k=3$ and $\ell=6$.  
\end{lemma}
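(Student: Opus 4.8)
The plan is to clear denominators and factor. Multiplying the relation $\frac{1}{k}+\frac{1}{\ell}=\frac{1}{2}$ through by $2k\ell$ gives $2\ell+2k=k\ell$, which rearranges to $k\ell-2k-2\ell+4=4$, i.e.\ $(k-2)(\ell-2)=4$. So the whole problem reduces to reading off the factorizations of $4$.

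First I would establish the crude bounds $2<k<4$ to pin down $k=3$ directly: since $\frac{1}{k}<\frac{1}{k}+\frac{1}{\ell}=\frac{1}{2}$ we get $k>2$, and since $\frac{1}{k}>\frac{1}{\ell}$ we get $\frac{2}{k}>\frac{1}{k}+\frac{1}{\ell}=\frac{1}{2}$, hence $k<4$. Thus $k=3$, and then $\frac{1}{\ell}=\frac{1}{2}-\frac{1}{3}=\frac{1}{6}$ forces $\ell=6$. Alternatively, working from $(k-2)(\ell-2)=4$: both factors are integers, their product is positive, so they have the same sign; if both were negative then $k,\ell\le 1$, which makes $\frac{1}{k}+\frac{1}{\ell}\ge 2>\frac12$, a contradiction, so both are positive, and $k<\ell$ gives $k-2<\ell-2$; the only such factorization of $4$ is $1\cdot 4$, yielding $k=3,\ell=6$.

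There is essentially no obstacle here; the only point requiring a line of care is ruling out the degenerate factor assignments (the negative-factor case, or $k-2=0$), which the bound $\frac1k+\frac1\ell\le \frac1k+\frac1k$ combined with $k\ge 1$ dispatches immediately. I would present the short version via the bounds $2<k<4$, since it is the cleanest and needs no case check on signs.
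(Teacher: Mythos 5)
Your proof is correct. The paper states this lemma without proof (it is offered as one of "two simple observations"), and your argument via the bounds $2<k<4$ — or equivalently the factorization $(k-2)(\ell-2)=4$ — is exactly the standard verification one would supply.
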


Here is our first characterization.  

\begin{lemma} \label{EKR2}
For $n \geq 5$, a set of $n$ positive integers has exactly $2^{n-1}$ divisors if and only if it is an anti-pencil.  
A set of four positive integers has 8 divisors if and only if it is an anti-pencil or of the form $\{a,2a,3a,6a\}$ for some $a \in \mathbb{N}$.  
\end{lemma}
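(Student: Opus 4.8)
The plan is to prove both directions of the equivalence, with the forward direction (extremal set $\Rightarrow$ anti-pencil, or the exceptional $4$-element shape) being the substantive one. For the easy direction, I would simply note that anti-pencils have $2^{n-1}$ divisors by construction (Section~\ref{lower}), and that a direct check shows $\{a,2a,3a,6a\}$ has exactly $8=2^3$ divisors: its element sum is $12a$, and the subsets summing to a divisor of $12a$ are precisely the non-empty subsets of $\{a,2a,3a\}$ (seven of them, summing to $a,2a,3a,3a,4a,5a,6a$) together with $A$ itself.

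For the forward direction, suppose $|A|=n\ge 4$ and $d(A)=2^{n-1}$. By the proof of Lemma~\ref{abundant1}, pairing $2^A$ into separations $\{B,A\setminus B\}$, equality forces \emph{every} separation to be either neutral or abundant, and moreover the injection $\phi$ from abundant separations to barren separations constructed in the proof of Lemma~\ref{tight2} must actually have empty domain (since barren separations contribute $0$ and cannot be hit if $d(A)$ is maximal) — so $A$ has \emph{no} abundant separation at all; equivalently $A$ has no halving set. Hence every one of the $2^{n-1}$ separations $\{B,A\setminus B\}$ is neutral: exactly one side divides $\sum A$. Writing $m=\sum A$, this says that for every subset $B$ with $\emptyset\ne B\subsetneq A$, exactly one of $\sum B$, $m-\sum B$ divides $m$. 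In particular the divisor-subsets of $A$, together with their complements, partition all $2^n-2$ proper non-empty subsets, and there are exactly $2^{n-1}-1$ proper non-empty divisor-subsets plus $A$ itself.

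The core combinatorial step is then to pin down which subsets are the divisors. I would argue that $a_n$ lies in no proper divisor-subset: if $B\ni a_n$ divided $m$ with $B\ne A$, then $\sum B\le m-a_1$, and combined with the complementary constraints this over-counts divisors unless $B=A$ — more precisely, I expect to show that the $2^{n-1}-1$ proper divisor-subsets are forced to be exactly the non-empty subsets of $A\setminus\{a_n\}$ by a counting/parity argument: there are exactly $2^{n-1}-1$ such subsets, each must divide $m$ (else its complement, which contains $a_n$ and has sum exceeding $m/2$, would have to divide $m$, forcing $\sum(A\setminus B)=m$, impossible for a proper non-empty set unless... and here the no-halving-set hypothesis plus the size-$4$ analysis from Lemma~\ref{tight2} enters). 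Once every non-empty $B\subseteq A\setminus\{a_n\}$ satisfies $\sum B \mid m$, taking singletons gives $a_i\mid m$ for $i<n$, and taking $B=A\setminus\{a_n\}$ gives $(m-a_n)\mid m$, hence $(m-a_n)\mid a_n$; then comparing $\sum(A\setminus\{a_n,a_j\})\mid m$ for each $j$ forces tight divisibility relations. For $n\ge 5$ I expect these relations (via Lemmas~\ref{diophantine} and~\ref{diophantine2} applied to equations like $\tfrac1k+\tfrac1\ell=\tfrac1m$ coming from $a_i+a_j$ dividing $m$) to collapse to exactly the anti-pencil condition, while for $n=4$ the same analysis leaves the extra Diophantine solution $\{a,2a,3a,6a\}$, which is precisely the case $\tfrac13+\tfrac16=\tfrac12$ of Lemma~\ref{diophantine2} — this is why $n=4$ is exceptional and $n\ge5$ is not.

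The main obstacle I anticipate is the bookkeeping in the forward direction: translating "$d(A)=2^{n-1}$ and no halving set" into the clean statement "every non-empty subset of $A\setminus\{a_n\}$ divides $\sum A$" requires care, because a priori a proper divisor-subset could contain $a_n$. I would handle this by showing that if $B\ni a_n$ is a proper divisor, then $\sum B\ge a_n > m/2$ is impossible (since $\sum B\mid m$ and $\sum B\ne m$ force $\sum B\le m/2$), unless $a_n\le m/2$, in which case a short argument — using that $A\setminus\{a_n\}$ has $n-1\ge 3$ elements so $\sum(A\setminus\{a_n\})\ge a_n$ has slack — reduces to the $n=4$ case already dissected in Lemma~\ref{tight2}'s proof. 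After that, the Diophantine endgame is routine given Lemmas~\ref{diophantine} and~\ref{diophantine2}.
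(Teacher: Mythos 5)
There is a genuine gap, and it sits at the very first step of your forward direction. From $d(A)=2^{n-1}$ you conclude that the injection from abundant to barren separations ``must have empty domain,'' i.e.\ that $A$ has no abundant separation and hence no halving set. That is false: since each separation contributes $0$, $1$, or $2$ to $d(A)$, equality $d(A)=2^{n-1}$ forces only that the number of abundant separations \emph{equals} the number of barren ones, not that both are zero. Indeed your own exceptional set $\{a,2a,3a,6a\}$ has the abundant separation $\{\{a,2a,3a\},\{6a\}\}$ (both sides sum to $6a\mid 12a$) balanced by the barren separation $\{\{a,6a\},\{2a,3a\}\}$, so the ``no halving set'' conclusion would contradict the very statement you are proving. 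The paper's proof has to work harder precisely here: it shows that if $a_2<\frac16\sum A$ then \emph{two} injections ($\phi_1$ moving $a_1$, $\phi_2$ moving $a_2$) with disjoint images (disjointness via Lemma~\ref{half}) produce strictly more barren than abundant separations, forcing $a_2\ge\frac16\sum A$; this confines any abundant separation to $n\in\{4,5\}$ with one side of size at most $2$, and the ensuing case analysis (using Lemmas~\ref{diophantine} and~\ref{diophantine2}) is exactly where $\{a,2a,3a,6a\}$ appears and where $n=5$ is eliminated. Your sketch has no mechanism to recover this case analysis once the ``no abundant separation'' shortcut is removed.

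The second gap is the all-neutral case. Granting that every separation is neutral, you still must rule out $a_n\le\frac12\sum A$ (only then does counting force the divisors to be exactly the nonempty subsets of $A\setminus\{a_n\}$, i.e.\ the anti-pencil). You defer this to ``a short argument'' that ``reduces to the $n=4$ case,'' but no such reduction exists and none is given: the paper needs a genuine argument valid for all $n\ge4$, namely taking a maximal set $M\ni a_n$ with $\sum M\le\frac12\sum A$, noting $\sum M\le\frac13\sum A$ because $M$ divides $A$, and then observing that adding any $a\notin M$ overshoots $\frac12\sum A$, so the complement of $M\cup\{a\}$ divides $A$ and has sum at most $\frac13\sum A$, forcing $a_n>a\ge\frac13\sum A$, a contradiction. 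Relatedly, your planned ``Diophantine endgame'' for $n\ge5$ is aimed at the wrong target: once $a_n>\frac12\sum A$, the anti-pencil conclusion is pure counting (no proper divisor can contain $a_n$, and there are only $2^{n-1}-1$ candidate proper divisors left), and the Diophantine lemmas are needed only inside the $n\in\{4,5\}$ abundant-separation analysis that your outline skips.
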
 
 
 \begin{proof}
 One direction is obvious.  For the other direction, let $n \geq 4$ and $A:=\{a_1, \dots, a_n\}$  have exactly $2^{n-1}$ divisors.  We may assume that $a_n \leq \frac{1}{2} \sum A$, else $A$ is an anti-pencil and we are done.    
 
 We claim that $a_2 \geq \frac{1}{6} \sum A$.  Suppose not.  Given an abundant separation of $A$, let $\phi_1$ be the map which moves $a_1$ across  the separation and let $\phi_2$ be the map which moves $a_2$ across the separation.  Since $a_1$ and $a_2$ are both less than $\frac{1}{6} \sum A$, we again have that $\phi_1$ and $\phi_2$ are injective maps from the set of abundant separations to the set of barren separations.  Moveover, by Lemma~\ref{half}, the images of $\phi_1$ and $\phi_2$ are disjoint.  Therefore, $A$ has more barren separations than abundant separations, which is a contradiction.  
 
 We next claim that $A$ does not contain any abundant separations or $A=\{a,2a,3a,6a\}$ for some $a \in \mathbb{N}$. Suppose $\{B,C\}$ is an abundant separation.  If $\max \{|B|,|C|\} \geq 4$ or $\min \{|B|,|C|\} \geq 3$, then $a_2 < \frac{1}{6} \sum A$; a contradiction.  In particular, this implies $n \in \{4,5\}$.  
 
 We first handle the case $n=5$.  Let $B:=\{b_1, b_2\}$ and $C:=\{c_1, c_2,c_3\}$ with $b_1<b_2$ and $c_1<c_2<c_3$.  By Lemma~\ref{half}, $\{B,C\}$ is the unique abundant separation of $A$.  Now, if $a_1=b_1$, then $a_2 \leq c_1 < \frac{1}{6} \sum A$; a contradiction.  Thus, $a_1=c_1$.  It follows that $\{\{b_1,b_2,c_1\}, \{c_2, c_3\} \}$ is the unique barren separation of $A$.  In particular $\{a_i\}$ divides $A$ for all $i \in [5]$.   Therefore, there exist positive integers $m_1>\dots>m_5$ such that $m_ia_i= \sum A$.  Since $m_2 \leq 6$ and $m_5 \geq 3$ we must have $m_2=6, m_3=5, m_4=4$, and $m_5=3$.  Now choose a 2-subset $A'$ of $\{a_2, a_3, a_4\}$ such that $A' \neq \{c_2, c_3\}$.  Since $\sum A' < \frac{1}{2} \sum A$ and $A' \neq \{c_2,c_3\}$, it follows that $A'$ divides $A$.  However, this is a contradiction, since the equation $\frac{1}{k}+\frac{1}{\ell}=\frac{1}{m}$ has no positive integer solutions for $\{k,\ell\} \subset \{4,5,6\}$ by Lemma~\ref{diophantine}.  
 
We thus have $n=4$.  Again by Lemma~\ref{half}, $\{B,C\}$ is the unique abundant separation of $A$.  Thus, there is a unique barren separation $\{B',C'\}$ of $A$. 
 First suppose $|B|=|C|=2$.  By relabelling if necessary,  $B=\{a_1,a_4\}$ and $C=\{a_2,a_3\}$.  Since $\{B',C'\}$ is the unique barren separation of $A$, at least three of $\{a_1\}, \{a_2\}, \{a_3\}$ or $\{a_4\}$ divide $A$.  By the pigeonhole principle, both members of $B$ divide $A$ or both members of $C$ divide $A$.  By Lemma~\ref{diophantine2}, we either have $6a_1=\sum A=3a_4$ or $6a_2=\sum A=3a_3$.   In the second case, swapping $a_1$ and $a_2$ or swapping $a_3$ and $a_4$ in $\{B,C\}$ both yield barren separations, which contradicts the uniqueness of $\{B',C'\}$.  The first case is also impossible as any single swap of $\{B,C\}$ yields a barren separation.  Therefore, we may assume $B=\{a_4\}$ and $C=\{a_1,a_2,a_3\}$.  Now consider the sets
 \[
 \{a_2\}, \{a_3\}, \{a_1,a_2\}, \{a_1, a_3\}, \{a_2, a_3\}.
 \]
 Since $A$ has exactly one barren separation, at least four of these sets divide $A$.  Let $m_1 \leq m_2 \leq m_3 \leq m_4$ be the multiples that appear for these four divisors $D_1, \dots, D_4$.   
 Note that $m_4 \leq 6$ since $a_2 \geq \frac{1}{6} \sum A$.  Suppose these multiples are all distinct.  In this case, it follows that $m_1=3, m_2=4, m_3=5, m_4=6$.  Since $\frac{1}{3} + \frac{1}{6}=\frac{1}{2}$, $D_1$ and $D_4$ must partition $\{a_1, a_2, a_3\}$.   It cannot be that $\{D_1,D_4\}=\{\{a_3\}, \{a_1, a_2\}\}$ since then neither $\{a_1, a_3\}$ nor $\{a_2, a_3\}$ divides $A$.  Thus, $D_1=\{a_1, a_3\}$ and $D_4=\{a_2\}$.  But now, $D_2$ and $D_3$ also partition $\{a_1, a_2,a_3\}$.  This is a contradiction since $\frac{1}{4}+\frac{1}{5} \neq \frac{1}{2}$.  Therefore, $m_i=m_j$ for some $i \neq j$.  This is only possible if $D_i$ and $D_j$ partition $\{a_1,a_2,a_3\}$.  Thus, $m_i=m_j=4$, $D_i=\{a_1,a_2\}$, and $D_j=\{a_3\}$.  Observe that at least one of $\{a_1,a_3\}$ or $\{a_2,a_3\}$ divides $A$, else $A$ has two barren separations.  Thus, $a_1+a_3=\frac{1}{3} \sum A$ or $a_2+a_3=\frac{1}{3} \sum A$.  So, either $a_1=\frac{1}{12} \sum A$ or $a_2=\frac{1}{12} \sum A$. It must be that $a_1=\frac{1}{12} \sum A$, since $a_2 \geq \frac{1}{6} \sum A$.  Finally, $a_2=\frac{1}{6} \sum A$, since $a_1+a_2= \frac{1}{4} \sum A$.  Thus, $A:=\{a,2a,3a,6a\}$ for some $a \in \mathbb{N}$, as required.
 
We may hence assume that every separation of $A$ is a neutral separation.  Thus, $B$ divides $A$ if and only if $\sum B < \frac{1}{2} \sum A$.  In particular, $\{a_n\}$ divides $A$ and so $a_n \leq \frac{1}{3} \sum A$.  Let $M$ be a maximal set (under inclusion) among all subsets of $A$ containing $a_n$ and with sum at most $\frac{1}{2}\sum A$.  Note that $\{a_n\}$ is a candidate for $M$, so $M$ exists.  Since $\sum M < \frac{1}{2} \sum A$, it follows that $M$ divides $A$ and hence $\sum M \leq \frac{1}{3} \sum A$.  Choose $a \notin M$ and consider $M \cup \{a\}$.  By choice of $M$ we have $\sum (M \cup \{a\}) > \frac{1}{2} \sum A$.  Thus, $A \setminus (M \cup \{a\}) \leq \frac{1}{3} \sum A$, since  $A \setminus (M \cup \{a\})$ divides $A$.  But now, $a_n > a \geq \frac{1}{3} \sum A$, which is a contradiction.  
 \end{proof}
 
 Combining Lemma~\ref{EKR2} with Lemma~\ref{easy} we have the following summary.
 
 \begin{theorem}
 For all $n \neq 3$, $d(n)=2^{n-1}$ and the sets that achieve this bound are precisely the anti-pencils or $A:=\{a,2a,3a,6a\}$ for some $a \in \mathbb{N}$.  For $n=3$, $d(3)=5$ and the sets that achieve this bound are
$\{a,2a,3a\}$ for some $a \in \mathbb{N}$.  
 \end{theorem}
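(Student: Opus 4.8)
The plan is purely to assemble the theorem from the results already proved, splitting into the cases $n \geq 5$, $n = 4$, $n = 3$, and $n \in \{1,2\}$. The only point requiring a moment's thought is that the auxiliary family $\{a,2a,3a,6a\}$ has exactly four elements, so it is vacuous for every $n \neq 4$; thus the asserted conclusion ``anti-pencils or $\{a,2a,3a,6a\}$'' collapses to ``anti-pencils'' in all cases except $n = 4$, and nothing is lost by stating it uniformly.

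For $n \geq 5$ I would apply Lemma~\ref{tight2} to get $d(n) = 2^{n-1}$, and then the first sentence of Lemma~\ref{EKR2} to conclude that a set of $n$ positive integers has exactly $2^{n-1}$ divisors if and only if it is an anti-pencil; by the remark above this is the stated form. For $n = 4$, Lemma~\ref{tight2} gives $d(4) = 2^{3} = 8$, and the second sentence of Lemma~\ref{EKR2} says verbatim that the sets of four positive integers with $8$ divisors are the anti-pencils together with the sets $\{a,2a,3a,6a\}$, $a \in \mathbb{N}$. For $n = 3$ the statement is exactly Lemma~\ref{easy}.

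It remains to treat $n \in \{1,2\}$, which Lemma~\ref{EKR2} does not cover. Lemma~\ref{easy} already supplies $d(1) = 1 = 2^{0}$ and $d(2) = 2 = 2^{1}$, so only the extremal characterization needs a direct check. A singleton $\{a_1\}$ has $\{a_1\} = A$ as its sole divisor, and since $A \setminus \{a_1\} = \emptyset$ this is precisely the anti-pencil condition in dimension one, so every $1$-set is extremal and is an anti-pencil. For $A = \{a_1, a_2\}$ with $a_1 < a_2$, the subset $\{a_2\}$ can never divide $A$ (as $a_2 \nmid a_1$), while $\{a_1\}$ divides $A$ if and only if $a_1 \mid a_2$; hence $d(A) = 2$ exactly when $a_1 \mid a_2$, which is exactly the condition that $A$ be an anti-pencil. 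Collecting the four cases yields the theorem. I anticipate no genuine obstacle here: beyond the cross-references to Lemmas~\ref{tight2}, \ref{EKR2}, and~\ref{easy}, the only content is the routine hand verification of $n \in \{1,2\}$ and the bookkeeping observation that the extra family occurs only at $n = 4$.
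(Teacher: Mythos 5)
Your proposal is correct and matches the paper's approach: the paper proves this theorem simply by combining Lemma~\ref{tight2}, Lemma~\ref{EKR2}, and Lemma~\ref{easy}, exactly as you do. Your explicit verification of the $n\in\{1,2\}$ characterization (which the paper leaves implicit) is accurate and harmless extra care.
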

 
 We now present the end of the story for $d(n, 2n)$ as well.

\begin{lemma} \label{EKR1}
Let $n \geq 3$ and let $A$ be a set of $2n$ positive integers.  If $A$ has exactly $\frac{1}{2} \binom{2n}{n}$ divisors of size $n$, then $A$ is an $n$-anti-pencil.
\end{lemma}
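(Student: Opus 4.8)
The plan is to reduce everything to the claim that \emph{no $n$-subset containing the largest element $a_{2n}$ divides $A$}. Since $\tfrac12\binom{2n}{n}=\binom{2n-1}{n}$ is exactly the number of $n$-subsets of $A\setminus\{a_{2n}\}$, that claim forces every $n$-subset of $A\setminus\{a_{2n}\}$ to divide $A$, which is precisely the definition of an $n$-anti-pencil. Write $S:=\sum A$ and $A=\{a_1<\dots<a_{2n}\}$. I would begin with two reductions. First, sorting the $\tfrac12\binom{2n}{n}$ strong separations into barren/neutral/abundant (contributing $0,1,2$ to $d_n(A)$), one gets $d_n(A)=\tfrac12\binom{2n}{n}-\#\text{barren}+\#\text{abundant}$, so the hypothesis forces $\#\text{abundant}=\#\text{barren}$; hence the injection $\phi$ constructed in the proof of Lemma~\ref{tight} is actually a \emph{bijection} from abundant to barren strong separations. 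Second, pairing each $n$-subset $B$ with $A\setminus B$: if $\sum B<\tfrac12 S$ then $\sum(A\setminus B)>\tfrac12 S$, so $A\setminus B$ cannot divide $A$; thus a complementary pair contains two divisors exactly when both members are halving sets, while every other pair contains at most one (its smaller side, and only if that side divides $A$). With $N$ the number of halving $n$-subsets of $A$ this yields $d_n(A)\le\binom{2n-1}{n}+\tfrac12 N$, with equality iff every $n$-subset of sum at most $\tfrac12 S$ divides $A$. Now I would split into the cases $N=0$ and $N>0$.

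If $N=0$, the displayed bound together with $d_n(A)=\binom{2n-1}{n}$ forces every $n$-subset of sum $<\tfrac12 S$ to divide $A$. A divisor of $S$ that is $<\tfrac12 S$ is $\le\tfrac13 S$, so no $n$-subset has sum in $(\tfrac13 S,\tfrac12 S)$, and taking complements none has sum in $(\tfrac12 S,\tfrac23 S)$; with $N=0$ this means every $n$-subset sum avoids the band $(\tfrac13 S,\tfrac23 S)$. Now $\{a_1,\dots,a_n\}$ has sum $\mu:=a_1+\dots+a_n$, which is a divisor of $A$ lying below the average $n$-subset sum $\tfrac12 S$, hence $\mu\le\tfrac13 S$; meanwhile $\{a_{n+1},\dots,a_{2n}\}$ has sum $S-\mu\ge\tfrac23 S$. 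Passing from the first $n$-subset to the second by single-element swaps, the sum sequence starts $\le\tfrac13 S$ and ends $\ge\tfrac23 S$, so some consecutive pair jumps from $\le\tfrac13 S$ to $\ge\tfrac23 S$; as one swap changes the sum by less than $a_{2n}$, this gives $a_{2n}>\tfrac13 S$. Then any $n$-subset containing $a_{2n}$ has sum $>\tfrac13 S$, hence $\ge\tfrac23 S>\tfrac12 S$, hence is not a divisor — which is exactly the claim.

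The case $N>0$ must be shown \emph{impossible}, since an $n$-anti-pencil has $a_{2n}>\tfrac13 S$ and therefore no halving $n$-subset. Here I would use that $\phi$ is onto: each barren strong separation $\{D,E\}$ is $\phi$ of an abundant one, and the two elements $\phi$ moves are minima of halving $n$-sets, hence each $<\tfrac{1}{2n}S$; so $\sum D,\sum E\in\big((\tfrac12-\tfrac{1}{2n})S,(\tfrac12+\tfrac{1}{2n})S\big)$. It follows that every $n$-subset of sum $\le(\tfrac12-\tfrac{1}{2n})S$ divides $A$, hence has sum $\le\tfrac13 S$. Feeding this into Lemma~\ref{half}, which for $n$-subsets drastically restricts the halving family — for $n=3$ it forces exactly one complementary pair of halving triples, so $N=2$ — pins down a long list of $n$-subsets whose sums are forced to be of the form $S/3,S/4,S/5,\dots$, and the Diophantine constraints of Lemmas~\ref{diophantine} and~\ref{diophantine2}, applied exactly as in the $n\in\{4,5\}$ analysis inside the proof of Lemma~\ref{EKR2}, then give a contradiction. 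I expect this halving case to be the main obstacle: the delicate point is to make the divisor bookkeeping go through uniformly in $n$. A cleaner route I would try first is to upgrade $\phi$ to two injections from abundant to barren strong separations with disjoint images (analogous to the maps $\phi_1,\phi_2$ in the proof of Lemma~\ref{EKR2}, with disjointness coming again from Lemma~\ref{half}); this would yield $\#\text{barren}\ge 2\,\#\text{abundant}>\#\text{abundant}$ whenever $N>0$, directly contradicting $\#\text{abundant}=\#\text{barren}$.
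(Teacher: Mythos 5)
Your setup and your $N=0$ case are fine, and they essentially reproduce the second half of the paper's argument: once there are no halving $n$-subsets every strong separation is neutral, every divisor of sum less than $\tfrac12 S$ has sum at most $\tfrac13 S$, and a swap chain between $\{a_1,\dots,a_n\}$ and $\{a_{n+1},\dots,a_{2n}\}$ forces a jump across the band $(\tfrac13 S,\tfrac23 S)$, giving $a_{2n}>\tfrac13 S$ and hence that no $n$-subset containing $a_{2n}$ divides $A$, which is the anti-pencil conclusion.

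The genuine gap is the case $N>0$, i.e.\ ruling out abundant strong separations, which is the heart of the lemma and which you only sketch. Your first route (use that $\phi$ from Lemma~\ref{tight} is a bijection, then ``Diophantine bookkeeping as in Lemma~\ref{EKR2}'') never derives a contradiction and gives no indication of how to make the bookkeeping uniform in $n$; as written it is not an argument. Your second, ``cleaner'' route is in fact what the paper does, but only for $n\ge 4$: move $a_1=\min A$ across the abundant separation and bring back either the smallest or the second-smallest element of the opposite side $C$; since $C$ has $n\ge 4$ elements summing to $\tfrac12 S$, all three moved elements are below $\tfrac16 S$, so both images are barren, injectivity and disjointness of the two maps follow from Lemma~\ref{half}, and the count of barren separations is at least twice the count of abundant ones, contradicting their equality. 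For $n=3$ this construction breaks: the second-smallest element of a halving triple is only guaranteed to be below $\tfrac14 S$, not $\tfrac16 S$, so the second map need not produce barren separations (the displaced side can land exactly on sum $\tfrac13 S$). The paper therefore treats $n=3$ separately: by Lemma~\ref{half} the abundant strong separation $\{\{b_1,b_2,b_3\},\{c_1,c_2,c_3\}\}$ is unique, swapping $b_1$ with $c_1$ gives one barren separation, one then argues $b_2,c_2\in(\tfrac16 S,\tfrac14 S)$, and swapping $b_2$ with $c_2$ gives a second barren separation, again contradicting equality of the two counts. Until you actually construct these maps, verify that their images are barren, and cover $n=3$ by some such separate argument, the lemma is not proved.
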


\begin{proof}
Let $n \geq 3$ and let $A$ be a set of $2n$ positive integers with  $\frac{1}{2} \binom{2n}{n}$ divisors of size $n$.   Suppose $A:=\{a_1, \dots, a_{2n} \}$ with $a_1 < \dots < a_{2n}$.  

We claim that $A$ does not contain any abundant strong separations.  Suppose not.  We first suppose $n=3$.  By Lemma~\ref{half}, $A$ has a unique abundant strong separation $\{\{b_1, b_2, b_3\}, \{c_1, c_2, c_3\}\}$ with elements labelled in increasing order.  Note that $b_1 < \frac{1}{6} \sum A$ and $c_1 < \frac{1}{6} \sum A$.  Thus, swapping $b_1$ and $c_1$ yields a barren separation.  If $b_2 \leq \frac{1}{6} \sum A$, then swapping $b_2$ and $c_1$ yields another barren separation; a contradiction.  Thus, $b_2 > \frac{1}{6} \sum A$.  On the other hand, $b_2 < \frac{1}{4} \sum A$ since $b_2 < b_3$.  By symmetry, we also have $\frac{1}{6}\sum A < c_2 < \frac{1}{4} \sum A$.  Thus, swapping $b_2$ and $c_2$ yields another barren separation.  So we may assume $n \geq 4$.  
Let $\{B,C\}$ be an abundant separation of $A$ with $a_1 \in B$.  Let $c_1$ and $c_2$ be the two smallest elements of $C$.  Define 
\[
\phi_1 (\{B,C\}):= \{ (B \setminus \{a_1\}) \cup \{c_1\} , (C \setminus \{c_1\}) \cup \{a_1\} \} \text{  and}
\]
\[ 
\phi_2 (\{B,C\}):= \{ (B \setminus \{a_1\}) \cup \{c_2\} , (C \setminus \{c_2\}) \cup \{a_1\}\}.
\]
Since $n \geq 4$, it follows that $a_1, c_1$ and $c_2$ are each less than $\frac{1}{6} \sum A$.  Therefore, both $\phi_1$ and $\phi_2$ are injective maps from the set of abundant strong separations to the set of barren strong separations.  Furthermore, by Lemma~\ref{half}, the images of $\phi_1$ and $\phi_2$ are disjoint.  Therefore $A$ contains more barren strong separations than abundant strong separations; a contradiction.   Thus, $A$ has no abundant strong separations as claimed.  

It follows that every strong separation of $A$ must be neutral.  Thus, an $n$-subset $B$ divides $A$ if and only if $\sum B < \frac{1}{2} \sum A$.  Consider $M:=\{a_1, \dots, a_{n-1}, a_{2n}\}$.  Note that $\sum M \leq \frac{1}{2} \sum A$, else $A$ is an $n$-anti-pencil and we are done.  Hence, in fact $\sum M \leq \frac{1}{3} \sum A$. Now let $B_1, \dots, B_{\ell}$ be a sequence of $n$-subsets of $A$ such that $B_1=M$, $B_{\ell}=\{a_{n+1}, \dots, a_{2n}\}$, and for each $1<j\leq \ell$, $B_j$ is obtained from $B_{j-1}$ by replacing some element of $B_{j-1}$ by a larger element not in $B_{j-1}$. Let $k$ be the first index such that $\sum B_k > \frac{1}{2} \sum A$, and let $B_k=B_{k-1} \triangle \{c,d\}$ with $c < d$.   Since $B_{j-1}$ and $A \setminus B_j$ both have sum less than $\frac{1}{2} \sum A$, they both must divide $A$.  Therefore, $B_{j-1} \leq \frac{1}{3} \sum A$ and $A \setminus B_j \leq \frac{1}{3} \sum A$.  It follows that $\sum A - d+c \leq \frac{2}{3} \sum A$.  
Thus, $a_{2n}>d>\frac{1}{3} \sum A$, which is a contradiction since $a_{2n} \in M$.  
\end{proof}

Combining Lemma~\ref{EKR1} with Lemma~\ref{olympiad} we have the following summary.

 \begin{theorem}
 For all $n \neq 2$, $d(n,2n)=\frac{1}{2} \binom{2n}{n}$ and the sets of $2n$ positive integers that achieve this bound are precisely the $n$-anti-pencils.  For $n=2$, $d(2,4)=4$ and the sets that achieve this bound are
$A= \{a, 5a, 7a, 11a\}$ or $A=\{a, 11a, 19a, 29a\}$
for some $a \in \mathbb{N}$.
 \end{theorem}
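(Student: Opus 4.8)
The plan is to assemble this theorem directly from the results already established, handling the small cases $n=1$ and $n=2$ separately since they lie outside the hypotheses of Lemma~\ref{tight} and Lemma~\ref{EKR1}.

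For $n \geq 3$, Lemma~\ref{tight} already gives $d(n,2n) = \frac12\binom{2n}{n}$, so only the characterization of the extremal sets remains. One direction is immediate from the definition: an $n$-anti-pencil $A = \{a_1,\dots,a_{2n}\}$ has as its set of $n$-subset divisors exactly the collection of all $n$-subsets of $A \setminus \{a_{2n}\}$, which has $\binom{2n-1}{n} = \frac12\binom{2n}{n}$ members, and such sets exist by the construction in Section~\ref{lower} (cf.\ Lemma~\ref{lower2}). For the converse, if a set $A$ of $2n$ positive integers achieves $d_n(A) = \frac12\binom{2n}{n}$, then Lemma~\ref{EKR1} tells us $A$ is an $n$-anti-pencil. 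This disposes of all $n \geq 3$.

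For $n=2$ the statement is exactly Lemma~\ref{olympiad}: $d(2,4) = 4$, with extremal sets precisely $\{a,5a,7a,11a\}$ and $\{a,11a,19a,29a\}$ for $a \in \N$. For $n=1$, take $A = \{a_1,a_2\}$ with $a_1 < a_2$; then $a_2 < a_1 + a_2 < 2a_2$, so $\{a_2\}$ never divides $A$ and hence $d_1(A) \leq 1 = \frac12\binom{2}{1}$, with equality if and only if $a_1 \mid a_2$, which is exactly the condition that $A$ be a $1$-anti-pencil. Since such sets obviously exist, $d(1,2) = 1$ with the claimed extremal family.

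Every step above is either a citation of a prior lemma or a one-line verification, so there is no substantive obstacle here; the only point requiring care is not to overlook the boundary case $n=1$, which neither Lemma~\ref{tight} nor Lemma~\ref{EKR1} covers.
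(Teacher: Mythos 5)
Your proof is correct and takes essentially the same route as the paper, which presents this theorem as a direct combination of Lemma~\ref{tight}, Lemma~\ref{EKR1}, and Lemma~\ref{olympiad}. Your explicit verification of the boundary case $n=1$ (showing $d(1,2)=1$ with the $1$-anti-pencils as the extremal sets) is a small point the paper leaves implicit, and you handle it correctly.
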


\section{Continuous analogues and open problems}
 
 If one considers subsets of \textit{real} numbers instead of natural numbers, then the question of divisibility no longer makes sense.  In this context, it is natural to instead ask 
 about subsets  with \textit{non-negative} sum.  
 
 Let $A$ be a set of $n$ real numbers.   Define $\mu(A)$ (respectively $\mu_k(A)$) to be the number of subsets (respectively $k$-subsets) $B$ of $A$ such that $\sum B \geq 0$.  
 We then define $\mu_{\min} (n)$ (respectively $\mu_{\max} (n)$) to be the minimum (respectively maximum) of $\mu(A)$ over all sets $A$ of $n$ real numbers with $\sum A=0$.  Similarly, we 
 define
 $\mu_{\min}(k,n)$ (respectively $\mu_{\max}(k,n)$) to be the minimum (respectively maximum) of $\mu_k(A)$ over all sets $A$ of $n$ real numbers with
 $\sum A=0$.  
 
 It is easy to check that $\mu_{\min}(n)=2^{n-1}+1$ for all $n \geq 1$, and that $A= \{1, \dots, n-1\} \cup \{\frac{-(n)(n-1)}{2}\}$ achieves this bound (recall that $\sum \emptyset = 0$).

On the other hand, the minimization problem for $\mu_k$ is non-trivial.  Indeed, the following nice conjecture of Manickam, Mikl{\'o}s, and Singhi
 asserts that $\mu_{\min}(k,n) \geq \binom{n-1}{k-1}$, for $n \geq 4k$.  
 
\begin{conjecture}[\cite{MMS1}, \cite{MMS2}]
If $n$ and $k$ are positive integers with $n \geq 4k$, and $A$ is a set of $n$ real numbers with $\sum A=0$, then the number of subsets of $A$ with
non-negative sum is at least $\binom{n-1}{k-1}$.   
\end{conjecture}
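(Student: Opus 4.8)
This final statement is the Manickam--Mikl\'{o}s--Singhi conjecture, and near the threshold $n=4k$ it is genuinely open, so what follows is a plan of attack rather than a finished argument. I would start with two routine normalizations. First, by clearing denominators it suffices to treat integer sets, and by a perturbation/limiting argument it suffices to treat the \emph{generic} case in which no nonempty proper subset of $A$ sums to $0$; then for every $k$-subset $B$ exactly one of $\sum B$ and $\sum(A\setminus B)$ is positive, which already yields the complementation identity $\mu_k(A)=\binom{n}{k}-\mu_{n-k}(A)$ and lets us assume $k\le n/2$.

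The core of the easy case is an averaging argument that settles $k\mid n$. Write $n=km$ and take a uniformly random unordered partition of $A$ into $m$ blocks of size $k$. The $m$ block sums add to $\sum A=0$, so at least one block has non-negative sum; hence the expected number of blocks with non-negative sum is at least $1$. On the other hand, a fixed $k$-subset $B$ occurs as a block with probability exactly $m/\binom{n}{k}$ (count the partitions of $A\setminus B$ into $m-1$ blocks of size $k$ against all partitions of $A$), so that expectation equals $\mu_k(A)\cdot m/\binom{n}{k}$. Comparing, $\mu_k(A)\ge\binom{n}{k}/m=\binom{n-1}{k-1}$, which is exactly the claimed bound; it is attained by $a_1=n-1$, $a_2=\cdots=a_n=-1$.

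For $k\nmid n$ the plan is to reduce to the divisible case. Write $n=km+r$ with $0<r<k$, append $s:=k-r$ zeros to get a multiset $A'$ of size $k(m+1)$ and sum $0$, and apply the divisible case to get $\mu_k(A')\ge\binom{n+s-1}{k-1}$. A $k$-subset of $A'$ using $j$ of the appended zeros has non-negative sum iff the remaining $k-j$ elements of $A$ do, so $\mu_k(A')=\sum_{j=0}^{s}\binom{s}{j}\mu_{k-j}(A)$, while by Vandermonde $\binom{n+s-1}{k-1}=\sum_{j=0}^{s}\binom{s}{j}\binom{n-1}{k-j-1}$; in the extremal configuration every term matches. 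The obstruction is precisely this coupling: isolating $\mu_k(A)$ would require \emph{upper} bounds on $\mu_{k-j}(A)$ for $j\ge1$, which are false for small $k-j$ (take $a_1=\cdots=a_{n-1}=1$, $a_n=-(n-1)$, where $\mu_i(A)=\binom{n-1}{i}>\binom{n-1}{i-1}$ whenever $i<n/2$). The route I would pursue is to replace the target inequality by a single carefully weighted linear inequality in $(\mu_1(A),\dots,\mu_k(A))$ that is tight on \emph{both} extremal families at once, and to check that the padding identity together with the divisible case (or, alternatively, a random partition into $m$ blocks of size $k$ and one block of size $r$, handling the short block separately) propagates it; pinning down the coefficients, and verifying that $n\ge 4k$ is exactly the hypothesis that makes the bookkeeping close, is where essentially all the difficulty lies.

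If that stalls, the fallback is the spectral method: form an auxiliary graph on the $k$-subsets of $A$ whose edges encode a sign-changing swap, bound its second eigenvalue, and apply a Hoffman-type estimate to force many non-negative subsets; this is known to give the conjecture for $n$ quadratic in $k$. The main obstacle, in any approach, is that near $n=4k$ there is no slack whatsoever, so any argument that loses even a constant factor in $n/k$ fails --- what is missing is a genuinely tight combinatorial injection or identity, in the spirit of the maps $\phi$ in Lemmas~\ref{tight2} and~\ref{tight}, that forces the extremum to be the single-heavy-element set.
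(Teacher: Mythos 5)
You should know at the outset that the paper does not prove this statement: it is quoted verbatim as the Manickam--Mikl\'{o}s--Singhi conjecture (\cite{MMS1}, \cite{MMS2}) and is open in the stated range $n \geq 4k$; the partial results the paper surveys (Alon--Huang--Sudakov \cite{MMS3}, Chowdhury \cite{ameerah}, Frankl \cite{frankl}, Pokrovskiy \cite{linearMMS}) reach only $n$ quadratic in $k$, or linear in $k$ with an enormous constant. So there is no proof in the paper to compare yours against, and your submission is, as you candidly say, a plan rather than a proof.

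Within that plan, the part you actually carry out is correct but classical: the random-partition averaging argument settles the divisible case $k \mid n$, giving $\mu_k(A) \geq \binom{n}{k}\cdot\frac{k}{n} = \binom{n-1}{k-1}$, and this is precisely the observation that motivated the conjecture in \cite{MMS1}. Your normalizations (clearing denominators, perturbing to the generic case, the complementation identity) are sound, and your diagnosis of why padding with zeros fails to transfer the bound when $k \nmid n$ is accurate --- isolating $\mu_k(A)$ from the identity $\mu_k(A')=\sum_j\binom{s}{j}\mu_{k-j}(A)$ would need upper bounds on $\mu_{k-j}(A)$ that are simply false (a minor quibble: the padded $A'$ and your extremal example are multisets, whereas the paper works with sets, but that is the least of the obstacles). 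The genuine gap is exactly the step you leave unspecified: the ``carefully weighted linear inequality'' tight on both extremal families is never exhibited, and the spectral fallback you mention is known to require $n$ of order $k^2$, so nothing in the proposal closes the gap down to the threshold $n \geq 4k$. In short: the case $k \mid n$ is correct, the general statement remains unproven here, as it does in the literature.
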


Note that by choosing $A$ with exactly one non-negative element, we do obtain $\binom{n-1}{k-1}$ non-negative $k$-subsets.  Such sets correspond
to the extremal examples in the Erd{\H{o}}s-Ko-Rado theorem~\cite{EKR}, so we will call them \textit{$k$-pencils}.   

The Manickam-Mikl{\'o}s-Singhi conjecture has recently received substantial attention.  We refer the reader to  Alon, Huang and Sudakov~\cite{MMS3}, Chowdhury~\cite{ameerah}, Frankl~\cite{frankl}, and Pokrovskiy~\cite{linearMMS}.  

We now discuss  $\mu_{\max}(n)$ and $\mu_{\max}(k,n)$, which can be viewed as continuous analogues of our functions $d(n)$ and $d(k,n)$.  

It is easy to see that maximizing $\mu(n)$ is equivalent to maximizing the number of subsets of $A$ whose sum is \textit{exactly} zero.  
For $n$ odd, this reduces to a conjecture of Erd{\H{o}}s and Moser, see~\cite{extremalNT1, extremalNT2}.  Using the Hard Lefschetz theorem~\cite{lefschetzbook} from 
algebraic geometry, Stanley~\cite{lefschetz, stanley2} solved a (generalization) of the Erd{\H{o}}s-Moser conjecture.

Thus, by~\cite[Corollary 5.1]{lefschetz}, we have the following summary for $\mu_{\max}(n)$.  
 
\begin{theorem}
For $n=2\ell$, $\mu_{\max}(n)$ is achieved by taking $A=\{-\ell, \dots, -1\} \cup \{1, \dots, \ell\}$.  For $n=2\ell+1$, $\mu_{\max}(n)$ is achieved by taking $A=\{-\ell, \dots \ell\}$.
\end{theorem}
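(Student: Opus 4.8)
The claim to establish is that for even $n=2\ell$, the quantity $\mu_{\max}(n)$ is attained by $A=\{-\ell,\dots,-1\}\cup\{1,\dots,\ell\}$, and for odd $n=2\ell+1$ it is attained by $A=\{-\ell,\dots,\ell\}$. The plan is to reduce this to a statement about counting zero-sum subsets, and then invoke Stanley's theorem via the Hard Lefschetz theorem, exactly as the surrounding text signals. The first step is the reduction observed just before the statement: maximizing $\mu(A)$ over sets $A$ with $\sum A=0$ is equivalent to maximizing the number of subsets $B\subseteq A$ with $\sum B=0$. I would prove this by pairing each subset $B$ with its complement $A\setminus B$: since $\sum A=0$, exactly one of $\sum B\geq 0$ or $\sum(A\setminus B)\geq 0$ fails unless $\sum B=\sum(A\setminus B)=0$, in which case both hold. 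Hence if $z(A)$ denotes the number of zero-sum subsets, then $\mu(A)=\tfrac{1}{2}(2^n - z(A)) + z(A) = 2^{n-1} + \tfrac{1}{2}z(A)$, so $\mu(A)$ and $z(A)$ are maximized by the same sets.

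Next I would translate $z(A)$ into the language of Stanley's result. Write $A=\{x_1,\dots,x_n\}$; the number of zero-sum subsets is the constant term (coefficient of $q^0$) in $\prod_{i=1}^{n}(1+q^{x_i})$, equivalently the number of $0/1$ solutions to $\sum \epsilon_i x_i = 0$. Stanley's work on the Erd\H{o}s--Moser-type problem (via Hard Lefschetz) shows that, among all choices of $n$ distinct integers (or distinct reals, since only the combinatorial structure of sign patterns matters and one may rescale/perturb to integers), the number of subsets with a fixed sum — in particular sum zero — is maximized by the ``most balanced'' symmetric choice, namely a set of $n$ consecutive integers centred at the origin. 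For $n=2\ell+1$ the centred set of consecutive integers is literally $\{-\ell,\dots,\ell\}$; for $n=2\ell$ there is no integer centre, and the symmetric consecutive configuration is $\{-\ell,\dots,-1\}\cup\{1,\dots,\ell\}$ (the consecutive integers from $-\ell$ to $\ell$ with $0$ omitted, which keeps the set $n$-element, distinct, and symmetric). So I would invoke \cite[Corollary 5.1]{lefschetz} with these parameters and read off precisely these two extremal sets.

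The one genuine subtlety — and the step I expect to be the main obstacle — is making sure the hypotheses of Stanley's corollary match our setup: his theorem is usually phrased for sets of distinct \emph{integers} and for the number of subsets summing to a prescribed value, and one must check (a) that passing from reals to integers is harmless (any finite set of reals can be approximated by integers without changing which subset sums vanish, after clearing denominators and noting that a generic small perturbation cannot create new vanishing sums, only destroy them — and we want to bound from above, so this direction is fine, with the extremal integer sets then realizing the bound), and (b) that ``sum equal to zero'' is the relevant target value for which his symmetric configuration is optimal, which is the centre of the relevant unimodal sequence and hence the case his Hard Lefschetz argument most directly covers. Once these bookkeeping points are settled, the theorem follows immediately: $\mu_{\max}(n)=2^{n-1}+\tfrac{1}{2}z_{\max}(n)$ where $z_{\max}(n)$ is Stanley's maximum, attained exactly at the two sets in the statement. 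I would not reproduce Stanley's algebraic-geometry argument; citing it is the intended route, and the only original content here is the complement-pairing reduction and the verification that the excerpt's named sets are the integer instantiations of his symmetric extremizers.
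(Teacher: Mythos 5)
Your proposal is correct and follows exactly the paper's route: the paper reduces maximizing $\mu(A)$ (over $\sum A=0$) to maximizing the number of zero-sum subsets — your complement-pairing identity $\mu(A)=2^{n-1}+\tfrac{1}{2}z(A)$ is precisely that easy reduction, spelled out — and then cites Stanley's Hard-Lefschetz resolution of the Erd\H{o}s--Moser-type problem \cite[Corollary 5.1]{lefschetz} to identify the extremal symmetric sets. Your extra remarks on passing from reals to integers and on sum zero being the relevant target are reasonable bookkeeping, but the paper treats the whole statement as an immediate consequence of Stanley's result, so no new argument is needed beyond what you give.
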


As far as we know, determining $\mu_{\max}(k,n)$ is a wide open problem, although similar questions have been considered.
For example, one can define $\mu'_{\max}(k,n)$ to be the maximum of $\mu(A)$ over 
all sets $A$ of $n$ real numbers such that $\sum A <0$ and $\sum B < 0$ for all $B \subseteq A$ with $|B| > k$.     
Recently, Alon, Aydinian and Huang~\cite{maxnonnegative} proved that $\mu'_{\max}(k,n)= \binom{n-1}{k-1}+ \dots + \binom{n-1}{0}+1$, settling a question of
Tsukerman.

Note that by choosing exactly one element of $A$ to be negative, we 
have the bound $\mu_{\max}(k,n) \geq \binom{n-1}{k}$.  We overload terminology and call such a set a
\textit{$k$-anti-pencil}.  This construction is not optimal for $n=2k$, but in the range of the Manickam-Mikl{\'o}s-Singhi conjecture, we
conjecture that it is.

\begin{conjecture}
If $n \geq 4k$, then $\mu_{\max}(k,n)=\binom{n-1}{k}$.
\end{conjecture}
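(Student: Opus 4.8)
The plan is to recast the problem as a \emph{dual} form of the Manickam--Mikl{\'o}s--Singhi (MMS) conjecture and then transfer the methods developed for MMS. The first step is the following reduction. Given a set $A$ of $n$ reals with $\sum A = 0$, consider $-A := \{-a : a \in A\}$, which also sums to $0$; a $k$-subset $B \subseteq A$ satisfies $\sum B \geq 0$ if and only if the corresponding $k$-subset $-B \subseteq -A$ satisfies $\sum(-B) \leq 0$, i.e.\ fails $\sum(-B) > 0$. Hence $\mu_k(A) = \binom{n}{k} - p(-A)$, where $p(X)$ denotes the number of $k$-subsets of $X$ with \emph{strictly} positive sum. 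Taking the maximum over $A$ (equivalently, the minimum of $p$ over all $n$-sets summing to $0$), the conjecture $\mu_{\max}(k,n) = \binom{n-1}{k}$ becomes the assertion
\[
\min\{\, p(X) : |X| = n,\ \sum X = 0 \,\} = \binom{n-1}{k-1} \qquad (n \geq 4k).
\]
The upper bound here is witnessed by the $k$-pencil (one large positive element, the rest small and negative), for which exactly the $\binom{n-1}{k-1}$ $k$-subsets containing the large element have positive sum; so only the matching lower bound requires proof.

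This lower bound is a \emph{strict-inequality} strengthening of the MMS conjecture (which asserts that at least $\binom{n-1}{k-1}$ of the $k$-subsets have \emph{non-negative} sum), and it trivially implies MMS. For the main step I would adapt the techniques from the MMS literature---the probabilistic/polynomial method of Alon--Huang--Sudakov~\cite{MMS3}, Pokrovskiy's argument~\cite{linearMMS} giving a bound linear in $k$, and the approaches of Frankl~\cite{frankl} and Chowdhury~\cite{ameerah}---keeping track of strict versus non-strict sums throughout; since a generic real set has no zero-sum $k$-subset, these arguments should yield the strict count for every $n$ that is at least (polynomially, or ideally linearly) large in $k$, giving the conjecture unconditionally in that range and conditionally on MMS for all $n \geq 4k$. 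In parallel one would pursue the uniqueness statement---that the only extremal sets are the $k$-anti-pencils---by combining a stability version of this analysis with the characterisations already obtained in Section~\ref{uniqueness} for the small cases.

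The main obstacle is twofold. First, for the full stated range $n \geq 4k$ the plain MMS conjecture is itself open, so without a new idea one cannot do better than ``the conjecture holds whenever (strict) MMS does''. Second---and this is the subtler point---passing from MMS to its strict form is not free: if $X$ has some zero-sum $k$-subsets, the obvious fix of perturbing $X$ (within the constraint $\sum X = 0$) to destroy them only \emph{increases} $p(X)$, so one cannot lower-bound $p(X)$ this way, and applying MMS to the perturbed set recovers no more than MMS applied to $X$ itself. Bridging this gap appears to require a \emph{stability} version of MMS---if the number of non-negative $k$-subsets is close to $\binom{n-1}{k-1}$ then $X$ is close to a $k$-pencil, which has no zero-sum $k$-subsets at all---so I expect the crux of a full proof to be either settling MMS in the linear regime $n \geq 4k$ or establishing such a stability result.
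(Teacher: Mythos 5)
This statement is not a theorem in the paper: it is stated as an open conjecture, and the paper offers no proof of it (nor could it, consistently with calling it a conjecture). So there is no argument of the author's to compare yours against; the relevant question is whether your proposal actually closes the gap, and it does not. What you have done correctly is the reduction: negating the set shows that $\mu_{\max}(k,n)=\binom{n-1}{k}$ is equivalent to the assertion that every $n$-set of reals with zero sum has at least $\binom{n-1}{k-1}$ $k$-subsets of \emph{strictly} positive sum, i.e.\ a strict-inequality strengthening of the Manickam--Mikl\'os--Singhi conjecture, with the $k$-pencil as the tight example. That observation, together with the anti-pencil upper-bound construction, is sound and is presumably why the author phrases the conjecture in the MMS range $n\geq 4k$ in the first place.

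The gap is the entire main step. ``Adapt the techniques of Alon--Huang--Sudakov, Pokrovskiy, Frankl and Chowdhury'' cannot deliver the stated range: MMS itself is open for $n\geq 4k$, and the cited methods prove it only for $n$ at least polynomially large in $k$ (Alon--Huang--Sudakov) or linearly large with an astronomically large constant (Pokrovskiy), so at best you would obtain the conjecture for $n\geq Ck$ with $C$ far from $4$ --- a different and weaker statement. Moreover, as you yourself point out, even assuming MMS in full the strict version does not follow: perturbation arguments move the count in the wrong direction, and a set with many zero-sum $k$-subsets could a priori satisfy MMS while violating the strict bound, so one needs either a stability theorem for MMS or a direct argument handling exact zero sums. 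Since both of these ingredients are currently unavailable, your text is a reasonable research programme and an accurate diagnosis of the difficulties, but it is not a proof of the conjecture, conditional or otherwise, in the range $n\geq 4k$.
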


One can also attempt to characterize the extremal examples for $\mu_{\min}(k,n)$ and $\mu_{\max}(k,n)$.  For example, Chowdhury~\cite{ameerah} gives some
values of $k$ and $n$ for which the extremal examples for $\mu_{\min}(k,n)$ are necessarily $k$-pencils.
Thus, it would be quite interesting to determine for which $k$ and $n$, the extremal examples for minimizing $\mu(k,n)$ and maximizing $\mu(k,n)$ are necessarily `dual' to one another.  

\begin{problem}
Determine for which $k$ and $n$ the only extremal examples for $\mu_{\min}(k,n)$ and $\mu_{\max}(k,n)$ are  $k$-pencils and $k$-anti-pencils, respectively.
\end{problem}

We end by mentioning that determining $d(k,n)$ for $n \neq 2k$ is also an open problem.  Recall that our proof technique relies on the fact that $B$ and $A \setminus B$ are both possible divisors of $A$, 
which fails when $n \neq 2k$.  Nonetheless, we conjecture that for most values of $k$ and $n$, $d(k,n)=\binom{n-1}{k}$.  Note that this agrees with the conjectured value for $\mu_{\max}(k,n)$. 
However, in the divisibility setting, it is possible that $d(k,n)=\binom{n-1}{k}$ for all but finitely many values.  

\begin{conjecture}
For all but finitely values of $k$ and $n$, $d(k,n)=\binom{n-1}{k}$.  
\end{conjecture}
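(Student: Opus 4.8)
The lower bound $d(k,n)\ge\binom{n-1}{k}$ is Lemma~\ref{lower2}, so the content of the conjecture is the matching upper bound $d_k(A)\le\binom{n-1}{k}$ for every set $A$ of $n$ positive integers, which should hold once $k$ and $n$ are large enough. The plan is to adapt the injection of Lemma~\ref{tight} to the case $n\ne 2k$, compensating for the fact that the complement of a $k$-subset is now an $(n-k)$-subset rather than another $k$-subset. Fix $A=\{a_1<\dots<a_n\}$ and put $\Sigma=\sum A$. The $\binom{n}{k}$ many $k$-subsets of $A$ split into the $\binom{n-1}{k}$ that avoid $a_n$ and the $\binom{n-1}{k-1}$ that contain $a_n$, so it suffices to construct an injection $\psi$ from $\mathcal{D}:=\{B:|B|=k,\ a_n\in B,\ \sum B\mid\Sigma\}$ into $\{B':|B'|=k,\ a_n\notin B',\ \sum B'\nmid\Sigma\}$: writing $p$ for the number of $k$-subsets avoiding $a_n$ that divide $A$, such a $\psi$ gives $|\mathcal{D}|\le\binom{n-1}{k}-p$, whence $d_k(A)=p+|\mathcal{D}|\le\binom{n-1}{k}$. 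One extreme case is immediate: if $a_n>\tfrac12\Sigma$, then every $k$-subset containing $a_n$ has sum strictly between $\tfrac12\Sigma$ and $\Sigma$ and so does not divide $A$, forcing $\mathcal{D}=\emptyset$ and $A$ to be an $n$-anti-pencil. Hence assume $a_n\le\tfrac12\Sigma$.

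For $B\in\mathcal{D}$ write $\sum B=\Sigma/j_0$ with $j_0\ge2$ (forced since $B\ne A$), and note $a_n<\Sigma/j_0$ because the other $k-1$ elements of $B$ are positive. The model map is $\psi(B)=(B\setminus\{a_n\})\cup\{x_B\}$ for a suitably chosen $x_B\in A\setminus B$; since $x_B<a_n=\max A$, we have $\sum\psi(B)=\Sigma/j_0-(a_n-x_B)<\Sigma/j_0$, and $\psi(B)$ fails to divide $A$ as soon as $a_n-x_B$ is less than the gap $\Sigma/j_0-\Sigma/(j_0+1)=\Sigma/\bigl(j_0(j_0+1)\bigr)$ just below $\Sigma/j_0$; for $j_0=2$ this gap is $\Sigma/6$, exactly as in Lemma~\ref{tight}. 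Injectivity follows much as there: if $\psi(B_1)=\psi(B_2)$ with $B_1\ne B_2$, then $B_i=(\psi(B_1)\setminus\{x_{B_i}\})\cup\{a_n\}$, so $|B_1\triangle B_2|=2$, which Lemma~\ref{half} forbids whenever both $B_1$ and $B_2$ are halving sets. Thus if every member of $\mathcal{D}$ is a halving set and each admits a partner $x_B$ with $a_n-x_B<\tfrac16\Sigma$, the argument goes through and recovers the spirit of the $n=2k$ case.

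The difficulty --- and the reason the statement is only conjectured --- is the remaining configurations: when $a_n$ is large but $\le\tfrac12\Sigma$, or when divisors $B\ni a_n$ of index $j_0\ge3$ occur, the single swap may overshoot the shrinking gap $\Sigma/\bigl(j_0(j_0+1)\bigr)$, and $\psi$ need no longer be injective once two members of $\mathcal{D}$ have distinct partners. The natural remedies are to swap in a larger element of $A\setminus B$, or to iterate the swap while tracking the running sum until it lands strictly between two consecutive values $\Sigma/j$, supplemented by a counting input: Stanley's Hard Lefschetz bound (the result of~\cite{lefschetz} already used for halving sets) caps the number of $k$-subsets of an $n$-set of distinct reals having any prescribed sum by a middle-binomial-type quantity, which for $k$ and $n$ large lies far below $\binom{n-1}{k-1}$, so applying it to the divisor values $\Sigma/j$ that occur should dispose of all but boundedly many indices $j$, the rest being handled by the swap. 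Making this dichotomy quantitative --- in particular fixing the threshold on $k$ and $n$ and controlling the interplay between the count and the injection --- is the main obstacle. As a caveat on the range, the sets $\{a,2a,3a,6a,12a,\dots,3a\cdot2^{\,n-3}\}$, of sum $3a\cdot2^{\,n-2}$, have all $n$ of their singletons dividing the sum, so $d(1,n)=n>\binom{n-1}{1}$ for every $n\ge3$; hence $k$, and not merely $n$, must be taken large, and determining how large is itself part of the problem.
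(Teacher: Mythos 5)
This statement is one of the paper's open conjectures; the paper offers no proof of it, so there is nothing to compare your argument against, and your writeup, read on its own terms, is a research program rather than a proof. Concretely, the argument is complete only in the regime it essentially inherits from Lemma~\ref{tight}: when every member of $\mathcal{D}$ is a halving set (index $j_0=2$) and each admits a swap partner $x_B$ with $a_n-x_B<\tfrac16\Sigma$. Outside that regime nothing is established: for divisors of index $j_0\ge 3$ the target gap $\Sigma/\bigl(j_0(j_0+1)\bigr)$ shrinks and you give no mechanism guaranteeing a valid $x_B$ exists; injectivity of $\psi$ rests on Lemma~\ref{half}, which only applies to halving sets, and fails to be justified once different $B$'s receive different partners; and the appeal to Stanley's theorem to ``dispose of all but boundedly many indices $j$'' is a heuristic, not a carried-out count (Stanley's result bounds the number of subsets with a \emph{single} prescribed sum, and you would still need to sum these bounds over all indices $j$ and reconcile the total with the injection). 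You are candid about all of this, which is to your credit, but it means the conjecture remains unproved by the proposal.

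One part of your note is genuinely valuable and worth flagging: the family $\{a,2a,3a,6a,12a,\dots,3a\cdot 2^{\,n-3}\}$ does have all $n$ singletons dividing the total $3a\cdot 2^{\,n-2}$, so $d(1,n)=n>\binom{n-1}{1}$ for every $n\ge 3$. Since this produces infinitely many exceptional pairs $(1,n)$, it shows the conjecture cannot be read literally as ``all but finitely many pairs $(k,n)$''; it must be interpreted with $k\ge 2$ (or with $k$ fixed and $n$ large, or some similar restriction). Identifying the correct range is itself part of the open problem, and your observation sharpens the statement rather than resolving it.
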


\subsection*{Acknowledgements}
We thank Alex Heinis for valuable discussions during the 2011 International Mathematical Olympiad in Amsterdam.  We also thank Ameera Chowdhury 
and Gjergji Zaimi for providing useful references.  
 
\bibliography{references}{}
\bibliographystyle{plain}

\end{document}